\documentclass[a4paper,11pt]{amsart}
\usepackage{amssymb}
\usepackage{amsmath}
\usepackage{mathrsfs}
\usepackage{amsmath,amssymb,amsthm,latexsym,amscd,mathrsfs}
\usepackage{indentfirst}
\usepackage{graphicx}
\usepackage{booktabs}
\usepackage{array}
\usepackage{chemarrow}
\setlength{\parindent}{2em}
\setlength{\parskip}{3pt plus1pt minus2pt}
\setlength{\baselineskip}{20pt plus2pt minus1pt}
\setlength{\textheight}{21.5true cm}
\setlength{\textwidth}{14.5true cm}
\setlength{\headsep}{10truemm}
\addtolength{\hoffset}{-12mm}

\DeclareMathOperator{\RE}{Re}

\newcommand{\Ca}{\mathbb{O}}  
\newcommand{\C}{\mathbb{C}}  
\newcommand{\kg}{\hspace{10pt}} 
\newcommand{\EJ}{\mathcal{H}_3}  
\newcommand{\AH}{\mathcal{SH}_3} 
\newcommand{\Tr}[1]{\mathrm{Tr}\hspace{1pt}(#1)} 

\newcommand{\R}{\mathbb{R}}
\newcommand{\ROM}[1]{\mathrm{\uppercase\expandafter{\romannumeral#1}}}
\theoremstyle{definition}
\newtheorem{exa}{Example}[section]
\newtheorem{thm}{Theorem}[section]
\newtheorem{lem}{Lemma}[section]

\newtheorem{rem}{Remark}[section]
\newtheorem{prop}{Proposition}[section]

\newtheorem{ack}{Acknowledgements}   
\makeatletter

\makeatother
\title[Schoen-Yau-Gromov-Lawson theory and isoparametric foliations]{\textbf{Schoen-Yau-Gromov-Lawson theory and isoparametric foliations}}
\author[Z.Z.Tang]{Zizhou Tang}\address{School of Mathematical Sciences, Laboratory of Mathematics and Complex Systems, Beijing Normal
University, Beijing 100875, China}\email{zztang@bnu.edu.cn}
\thanks {The project is partially supported by the NSFC ( No.11071018 ) and the Program for Changjiang Scholars and Innovative
Research Team in University.}
\author[Y.Q.Xie]{Yuquan Xie}
\address{Department of Mathematics, Hangzhou Normal University, Hangzhou, Zhejiang 310036, China; School of Mathematical Sciences, Peking University, Beijing 100871, China} \email{yuqxie@pku.edu.cn}
\author[W. J. Yan]{Wenjiao Yan}
\address{School of Mathematical Sciences, Laboratory of Mathematics and Complex Systems, Beijing Normal
University, Beijing 100875, China} \email{wjyan@mail.bnu.edu.cn}

 \subjclass[2000]{ 53C12, 53C40, 57R20.}
\date{}
\keywords{Schoen-Yau-Gromov-Lawson theory, isoparametric hypersurface, positive scalar curvature, double of a manifold, cohomogeneity one action.}
\begin{document}

\maketitle
\begin{center}
Dedicated to Professor Yuanlong Xin on his 70th birthday.
\end{center}
\begin{abstract}
Motivated by the celebrated Schoen-Yau-Gromov-Lawson surgery theory
on metrics of positive scalar curvature, we construct a double
manifold associated with a minimal isoparametric hypersurface in the
unit sphere. The resulting double manifold carries a metric of
positive scalar curvature and an isoparametric foliation as well. To
investigate the topology of the double manifolds, we use K-theory
and the representation of the Clifford algebra for the FKM-type, and
determine completely the isotropy subgroups of singular orbits for
homogeneous case.
\end{abstract}

\section{Introduction}

One of the simplest invariants of a Riemannian manifold is its
scalar curvature function. Here, we say an $n$-dimensional manifold
$M$ carries a metric of positive scalar curvature $R_M$ if $R_M\geq
0$ and $R_M(p) > 0$ for some point $p\in M $. Then a natural
question to raise is ``Which manifolds admit Riemannian metrics of
positive scalar curvature?" In recent decades, this subject has been
the focus of lively research. The first important contribution to
this subject was made by A. Lichnerowicz (\cite{Lic}) in 1962, who
showed that a compact spin manifold with non-vanishing
$\widehat{A}$-genus cannot carry a Riemannian metric of positive
scalar curvature. N. Hitchin (\cite{Hit}) generalized this result.
More precisely, he used a ring homomorphism $\alpha$, constructed by
J. Milnor, from $\Omega_*^{\text{spin}}$, the spin cobordism ring,
to $KO^{-*}(\text{pt})$, and proved that $\alpha(M)$ vanishes if $M$
carries a metric of positive scalar curvature. When $\dim M \equiv
0~ (\mod 4)$, $\alpha(M)$ can be identified with $\widehat A(M)$ (up
to a factor), so this recovers the result of Lichnerowicz.
One surprising and
beautiful result of this study was that half of the exotic spheres
in dimensions $8k+1$ and $8k+2$ cannot carry metrics of positive
scalar curvature. Another remarkable step toward answering the
question above was made when Schoen and Yau (\cite{SY}), and
independently, Gromov and Lawson (\cite{GL1}) established the
following ``surgery theorem" on metrics of positive scalar
curvature:

\noindent
\textbf{Theorem}\,\,
{\itshape
Let $X$ be a compact manifold which carries a Riemannian metric of positive scalar curvature. Then any manifold which can
be obtained from $X$ by performing surgeries in codimension $\geq3$ also carries a metric of positive scalar curvature.
}

Inspired by Schoen-Yau-Gromov-Lawson's surgery theory, we will construct a new manifold with rich geometrical properties from
a Riemannian manifold with an embedding hypersurface. In particular, we implement this construction on a unit sphere with
a minimal isoparametric hypersurface, finding that the new manifold admits not only complicated topology, but also a metric of
positive scalar curvature. Moreover the isoparametric foliation is kept. Details of the construction are given in the following.

Given a compact, connected manifold $X^n$ ($n \geq 3$) without boundary. Let $Y^{n-1}\hookrightarrow X^n$ be a
connected embedding hypersurface with a trivial normal bundle, and $\pi_0(X- Y) \neq 0$, \emph{i.e.},
the complement of $Y$ in $X$ is not connected. Then $Y^{n-1}$ separates $X^n$ into two components,
say $X^{n}_{+}$ and $X^n_{-}$, with the same boundary $Y^{n-1}$. (The assumption $\pi_0(X- Y) \neq 0$ is necessary.
For example, $T^2-S^1$, removing a latitude circle from the torus, is connected.) Since $Y$ has a trivial normal bundle in $X$, we can choose a unit normal vector field $\xi$
on $Y$, which is an interior normal direction with respect to $X_+$.

Define a continuous function $r: X^n \longrightarrow \mathbb{R}$
\begin{displaymath}
x \mapsto \left\{ \begin{array}{ll}
~~\mathrm{dist}(x, Y), & \textrm{if $x \in X_+$}\\
-\mathrm{dist}(x, Y), & \textrm{if $x \in X_-$}
\end{array} \right.
\end{displaymath}
where $\mathrm{dist}(x, Y)$ means the distance from $x$ to $Y$.
Clearly, $X_+$ ($X_-$) is just the subset that $r\geq 0$ (resp.
$r\leq 0$). Let $Y_r:=\{x \in X|~ r(x)=r\}$ for $|r|$ so small that
$Y_r$ is still an embedding hypersurface. We extend $\xi$ to a unit
vector field in a neighborhood of $Y$ such that $\xi$ is normal to
$Y_r$.

From now on, without loss of generality, we only deal with $X^n_+$. Concerning with the Riemannian product space $X^n_+\times \mathbb{R}$ with coordinates $(x, t)$,
for a small number $\bar{r}>0$, we can define a hypersurface $M^n$ in $X^n_+\times \mathbb{R}$ as \cite{GL1}
$$M^n := \{(x,t)\in X^n_+\times \mathbb{R}~:~(r(x), t)\in \gamma, ~r(x)\leq \bar{r}\}$$
where $\gamma$ is a curve in the $(r,t)$-plane as pictured below:
\begin{figure}[h]
\label{N}
\begin{center}
\includegraphics[height=46mm]{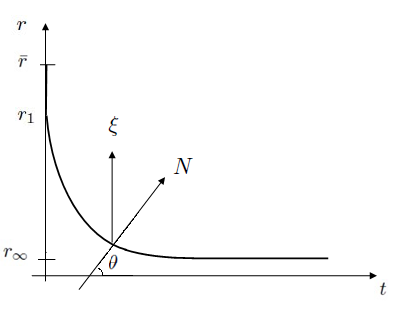}
\end{center}
\end{figure}

$$N:~unit~``exterior"~normal~vector~field~on~M,~~\sin\theta:=\langle N, \xi\rangle$$

The curve $\gamma$ begins at one end with a vertical line segment $t=0$, $r_1\leq r\leq \bar{r}$, and ends with a horizontal line segment
$r=r_\infty >0$, with $r_\infty$ small enough as we will require.

Now fix a point $q=(x,t)\in M$ corresponding to $(r, t)\in \gamma$.
Choose an orthonormal basis $e_1, e_2,...,e_{n-1}$ of $T_xY_r$ such
that the shape operator $A_{\xi}$ is expressed as
$A_{\xi}e_i=\mu_i(r)e_i$ for $i=1,...,n-1$. Then the associated
principal curvatures of $M$ at $q$ are of the form
$\lambda_i=\mu_i(r) \sin\theta$ for $i=1,...,n-1$. As observed by
Gromov-Lawson (\cite{GL1}), the tangent vector of $M\cap (l\times
\mathbb{R})$ is also a principal direction for the second
fundamental form of $M$ in $X^n_+\times \mathbb{R}$, where $l$ is a
geodesic ray in $X_+$ emanating from $Y$. We denote this tangent
vector by $e_n$. Thus the $n$-th principal curvature at $q$ is
$\lambda_n:= k$, the (nonnegative) curvature of $\gamma$ at $(r,
t)$.

Look at the Gauss equation:
$$K_{ij}^M=K_{ij}^{X\times\mathbb{R}} + \lambda_i\lambda_j,\qquad1\leq i,j\leq n,$$
where $K_{ij}^M$ is the sectional curvature of $M$ of the plane $e_i\wedge e_j$, and $K_{ij}^{X\times\mathbb{R}}$
is the corresponding sectional curvature of $X_+\times\mathbb{R}$. Since the metric of $X_+\times\mathbb{R}$ is the product metric,
we see:
\begin{eqnarray*}
&&K_{ij}^{X\times\mathbb{R}}=K_{ij}^X, \qquad1\leq i,j\leq n-1,\\
&&K_{n,j}^{X\times\mathbb{R}}=K_{\xi,j}^X\cos^2\theta,
\end{eqnarray*}
where $K^X$ is the sectional curvature of $X_+$.

It follows immediately that the scalar curvature of $M$ with the induced metric can be expressed as:
\begin{equation}\label{scalar}
R_M=\sum_{i\neq j}^nK_{ij}^M =R_X+2A\sin^2\theta+2kH(r)\sin\theta
\end{equation}
where $$A:=\sum_{i< j\leq n-1}\mu_i(r)\mu_j(r) -Ric^X(\xi,\xi);\quad H(r)=\sum_{i=1}^{n-1}\mu_i(r), \emph{ ~the ~mean~ curvature~
of}~ Y_r.$$

\begin{rem}\label{scalar1}
Since $\gamma$ ends with a horizontal line segment, $M$ has the standard product metric as $t$ goes to infinity. This guarantees
that we can glue $X_+$(\emph{resp}.$X_-$) smoothly in metric with a copy of itself along $Y$ to get a new manifold called the double
of $X_+$ (\emph{resp}.$X_-$), and denoted by $D(X_+)$(\emph{resp}.$D(X_-)$). The double of a manifold with boundary, as a topological
concept, appeared in 1930's. Gromov-Lawson (\cite{GL2}) studied the geometric property of the double of a manifold, and
showed an interesting theorem which states that if $X$ carries a metric of positive scalar curvature, and $Y$ is a minimal hypersurface,
then the double manifold $D(X_+)$(\emph{resp}.$D(X_-)$) also carries a metric of positive scalar curvature. But in their
construction of the double manifold, they ``bent" too much near the boundary of $X_+$, inducing some singularities or creases.
However in our method, an explicit construction of $D(X_+)$(\emph{resp}.$D(X_-)$) with satisfactory properties is given.


\end{rem}

\begin{rem}\label{scalar2}
Formula (\ref{scalar}) is the expression of the scalar curvature of $M$ in $X_+\times\mathbb{R}$. It also holds in $X_-\times\mathbb{R}$
although $\xi$ is the exterior normal vector field of $X_-$.
Gromov and Lawson(\cite{GL1}) studied the scalar curvature of $M$. Their formula is expressed in form of the estimate of principal
curvatures, while ours is an explicit expression. Their main result on surgery is of course correct although in their formula $(1)$ (\cite{GL1}) they lost a factor $2$. In addition, in $(1')$ they missed one item related to the second fundamental form of the submanifold. But this mistake would result in the missing of the item $H(r)$ in our formula (\ref{scalar}), which is, however, essential for our research.
Rosenberg and Stolz(\cite{RS}) modified Gromov-Lawson's expression, but they also lost
the principal curvatures or the second fundamental form of the submanifold.

\end{rem}

From now on, we will be concerned with $X^n=S^n(1)$. Suppose that
$Y^{n-1}$ is a compact minimal isoparametric hypersurface,
\emph{i.e.}, a compact hypersurface with vanishing mean curvature
and constant principal curvatures $\mu_1, \mu_2,...,\mu_{n-1}$
(\emph{cf. }\cite{CR}). In fact, in every isoparametric family in
the unit sphere, there does exist one and only one minimal
hypersurface (\emph{cf.} \cite{GT1}). Recall an elegant result of
M\"{u}nzner (\cite{Mu80}) that the number $g$ of distinct principal
curvatures must be $1$, $2$, $3$, $4$ or $6$; and the multiplicities
$m_i$ $(i=1, 2,..., g)$ of distinct principal curvatures satisfy
$m_k=m_{k+2}$ (subscripts $\mod ~g$). We will denote them by $m_+$
and $m_-$, respectively. More precisely, as it is well known that
every isoparametric hypersurface in the unit sphere corresponds to
an isoparametric function $f$ with image $[-1, 1]$. Denote the focal
submanifolds by $M_+:=f^{-1}(1)$ and $M_-:=f^{-1}(-1)$ so that
$codim(M_+)=m_++1$, $codim(M_-)=m_-+1$.

One of the main results of the present paper is:
\begin{thm}\label{isop1}
Let $Y^{n-1}$ be a compact minimal isoparametric hypersurface in $S^n(1)$, $n\geq 3$, which separates $S^n$ into $S^n_+$ and $S^n_-$.
Then each of doubles $D(S^n_+)$ and $D(S^n_-)$ admits a metric of positive scalar curvature. Moreover, there is still an
isoparametric foliation in $D(S^n_+)$ (or $D(S^n_-)$).
\end{thm}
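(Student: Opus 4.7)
The plan is to apply the paper's general doubling construction to $X=S^n(1)$ with the given minimal isoparametric $Y^{n-1}$, choosing the parameters $r_1,\bar r,r_\infty$ and the shape of $\gamma$ so that the scalar curvature expression (\ref{scalar}) is non-negative throughout $M$, with strict positivity at the vertical end of $\gamma$. Because $\gamma$ terminates in a horizontal segment, the induced metric on $M$ is a Riemannian product near that end (cf.\ Remark \ref{scalar1}), so reflecting and gluing will produce a smooth metric on $D(S^n_+)$ in which the scalar-curvature sign persists; by the symmetry asserted in Remark \ref{scalar2}, the same argument covers $D(S^n_-)$.

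The key algebraic ingredient is the identity
\[
\sum_{i=1}^{n-1}\mu_i(0)^{2}\;=\;(g-1)(n-1),
\]
valid for every minimal isoparametric hypersurface in $S^n$. I would verify it case-by-case for $g\in\{1,2,3,4,6\}$ by substituting M\"{u}nzner's formula $\mu_i=\cot\theta_i$ with $\theta_i$ in arithmetic progression of step $\pi/g$ into the minimality relation $\sum m_i\cot\theta_i=0$. Combining this identity with $R_X=n(n-1)$, $\mathrm{Ric}^X(\xi,\xi)=n-1$, and the minimality identity $\sum_{i<j}\mu_i(0)\mu_j(0)=-\tfrac{1}{2}\sum\mu_i(0)^2$, one obtains
\[
R_X+2A(0)\;=\;(n-1)(n-g-1)\;\ge\;0,
\]
where the inequality $n\ge g+1$ is another case-by-case check against the admissible dimensions in M\"{u}nzner's list. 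By continuity $R_X+2A(r)\ge 0$ for every $r\in[0,\bar r]$ once $\bar r$ is small enough.

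Rewriting (\ref{scalar}) as
\[
R_M\;=\;R_X\cos^2\theta\;+\;\bigl(R_X+2A(r)\bigr)\sin^2\theta\;+\;2kH(r)\sin\theta,
\]
I plan to show each summand is non-negative on $M$. The first two are covered by the previous paragraph. For the third, minimality gives $H(0)=0$, while direct differentiation of $H(r)=\sum m_i\cot(\theta_i-r)$ yields $H'(0)=\sum m_i\csc^2(\theta_i)=(n-1)+\sum\mu_i(0)^2=g(n-1)>0$; hence $H(r)\ge 0$ on $[0,\bar r]$ once $\bar r$ is small, and together with $k\ge 0$ and $\sin\theta\ge 0$ (the sign conventions built into the figure) the third summand is $\ge 0$ as well. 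On the initial vertical segment $\sin\theta=0$ forces $R_M=R_X=n(n-1)>0$, giving the required strict positivity somewhere. The main obstacle I anticipate is the pair of case-by-case verifications (the identity $\sum\mu_i(0)^2=(g-1)(n-1)$ and the dimension bound $n\ge g+1$ in every admissible class), both of which reduce to routine trigonometric computations once M\"{u}nzner's classification is in hand.

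For the isoparametric foliation on $D(S^n_+)$, note that by construction $M=\bigcup_{(r,t)\in\gamma}(Y_r\times\{t\})$, so the level sets of $r$ are preserved along $M$. After reflecting across the cylindrical end and gluing, $D(S^n_+)$ inherits a singular foliation whose regular leaves are two diffeomorphic copies of each $Y_r$, $r\in(0,\pi/g)$, joined along $Y$ (the gluing locus) and terminating at two copies of the focal submanifold $M_+$; this reproduces the isoparametric foliation of $S^n$ in doubled form, completing the proof.
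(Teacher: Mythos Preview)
Your approach is essentially the paper's: the same rewriting of (\ref{scalar}) as
\[
R_M=R_X\cos^2\theta+\bigl(R_X+2A(r)\bigr)\sin^2\theta+2kH(r)\sin\theta,
\]
the same identification $R_X+2A(0)=(n-1)(n-g-1)$ via $\sum\mu_i(0)^2=(g-1)(n-1)$ (which the paper simply cites from Peng--Terng rather than rechecking case by case), and the same sign analysis of $H(r)$ near $r=0$.

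There is, however, one genuine gap. Your sentence ``By continuity $R_X+2A(r)\ge 0$ for every $r\in[0,\bar r]$ once $\bar r$ is small enough'' is only valid when the value at $r=0$ is \emph{strictly} positive. In the boundary case $n=g+1$, i.e.\ $(m_+,m_-)=(1,1)$ with $g\in\{2,3,4,6\}$, you have $R_X+2A(0)=0$, and continuity gives you nothing. This matters precisely on the horizontal end of $\gamma$, where $\cos\theta=0$ and $k=0$, so $R_M=\bigl(R_X+2A(r_\infty)\bigr)$ with no other term to absorb a possible negative sign. (Note also that $R_X+2A(r)=R_{Y_r}$, so what you need is exactly $R_{Y_r}\ge 0$ near the minimal leaf; at $r=0$ this is zero in the $(1,1)$ case.)

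The paper confronts this head-on by splitting into two cases. In Case~(B), $\max\{m_+,m_-\}\ge 2$, one has $(n-1)(n-g-1)>0$ and your continuity argument goes through. In Case~(A), $(m_+,m_-)=(1,1)$, the paper uses the Cartan--M\"unzner identities to derive closed formulas for $H(r)$ and $\sum\mu_i(r)^2$ (equations (\ref{sum of mu})--(\ref{exact a(r)})) and checks directly that the error term $a(r):=R_{Y_r}-R_Y$ is \emph{identically} zero when $m_+=m_-$, so $R_X+2A(r)\equiv 0$ and $R_M=n(n-1)\cos^2\theta+2kH(r)\sin\theta\ge 0$. Your first-derivative computation $H'(0)=g(n-1)$ is correct and useful, but a parallel first-order analysis of $a(r)$ would not suffice here (one finds $a'(0)=0$ as well); you really need the explicit identity, or at least the observation that the principal curvatures are symmetric about $0$ along the whole family when $m_+=m_-$, forcing $R_{Y_r}\equiv 0$.
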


\begin{rem}
As a direct result, we get the $KO$-characteristic numbers
$$\alpha(D(S^n_+))=0,~~~~~~\alpha(D(S^n_-))=0.$$

\end{rem}

Furthermore, we have:

\begin{prop}\label{pi-manifold}
$D(S^n_+)$ (\emph{resp}.$D(S^n_-)$) is a $\pi$-manifold, \emph{i.e.}, a stably parallelizable manifold. In
particular, it is an orientable, spin manifold with vanishing Stiefel-Whitney
classes and Pontrjagin classes.
\end{prop}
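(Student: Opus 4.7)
The plan is to realise $D(S^n_+)$ as the smooth boundary of a compact submanifold of a Euclidean space whose normal bundle visibly splits as a sum of trivial line bundles; stable parallelizability is then immediate, and the remaining assertions follow formally. First I would make precise the following smoothing-theory identification: the double $D(S^n_+)$ is diffeomorphic to the smoothed boundary of the compact manifold with corners $W := S^n_+ \times [0,1]$. As a manifold with corners, $\partial W$ consists of two copies $S^n_+ \times \{0\}$ and $S^n_+ \times \{1\}$ joined along the cylinder $Y \times [0,1]$, and rounding the corners along $Y \times \{0,1\}$ by the standard collar construction produces a closed smooth $n$-manifold diffeomorphic to $D(S^n_+)$.

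Using the inclusions $S^n_+ \subset S^n \subset \R^{n+1}$ and $[0,1] \subset \R$, one embeds $W$ into $S^n \times \R \subset \R^{n+1}\times\R = \R^{n+2}$; the smoothed $W$ sits as a compact codimension-$1$ submanifold of $S^n \times \R$, so $D(S^n_+)=\partial W$ is a closed smooth $n$-submanifold of $\R^{n+2}$ of codimension $2$. I would then decompose its normal bundle as
$$\nu_{D(S^n_+)/\R^{n+2}} \;\cong\; \nu_{D(S^n_+)/W} \;\oplus\; \bigl(\nu_{W/\R^{n+2}}\bigr)\big|_{D(S^n_+)}.$$
The first summand is trivialised by the outward unit normal along the boundary of the oriented manifold $W$; the second by the outward unit radial vector field of $S^n \subset \R^{n+1}$, extended constantly along $\R$, which is normal to $S^n \times \R$ in $\R^{n+2}$. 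Hence the total normal bundle is trivial of rank $2$, giving $TD(S^n_+)\oplus \epsilon^2 \cong \epsilon^{n+2}$; this says exactly that $D(S^n_+)$ is a $\pi$-manifold. The argument for $D(S^n_-)$ is identical.

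The ``in particular'' statements are now formal: Stiefel-Whitney and Pontrjagin classes are stable characteristic classes and so vanish on any stably parallelizable manifold, while $w_1=0$ yields orientability and $w_2=0$ produces a spin structure. The only step requiring care is the corner-smoothing identification in the first paragraph, a routine application of the collar neighborhood theorem, but one that must be carried out carefully to ensure that the smooth structure on $\partial W$ matches the one on $D(S^n_+)$ obtained in Remark~\ref{scalar1}; once this is in place, the rest of the proof is simply the normal-bundle computation above.
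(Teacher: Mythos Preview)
Your argument is correct (modulo a small slip: you write that the smoothed $W$ is a ``codimension-$1$ submanifold of $S^n\times\R$'', but $W=S^n_+\times[0,1]$ is codimension~$0$ there and codimension~$1$ in $\R^{n+2}$; your subsequent normal-bundle decomposition uses the correct dimensions, so this is only a misstatement, not a gap).

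The paper's proof takes a genuinely different route. It invokes M\"unzner's result that $S^n_+$ is the normal disc bundle $B(\nu_+)$ of the focal submanifold $M_+\hookrightarrow S^n$, and then exhibits an explicit diffeomorphism $D(S^n_+)\cong S(\nu_+\oplus\mathbf{1})$. From the sphere-bundle structure $\rho:S(\nu_+\oplus\mathbf{1})\to M_+$ one gets
\[
T(D(S^n_+))\oplus\mathbf{1}\;\cong\;\rho^*TM_+\oplus\rho^*(\nu_+\oplus\mathbf{1})\;\cong\;\rho^*j_+^*TS^n\oplus\mathbf{1}\;\cong\;\mathbf{n{+}1},
\]
one fewer stabilisation than yours. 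The trade-off: your argument is more elementary and entirely general --- it shows that the double of \emph{any} compact domain in $S^n$ is a $\pi$-manifold, using nothing about isoparametric foliations --- whereas the paper's proof relies on the disc-bundle structure over $M_+$, which is specific to the isoparametric setting. The payoff for the paper is that the identification $D(S^n_+)\cong S(\nu_+\oplus\mathbf{1})$ is not incidental: it is reused throughout Section~3 (the cohomogeneity-one description in the homogeneous case, the explicit computation in Example~\ref{311}, and the analysis of $D(S^{2l-1}_-)\cong S(\nu_-\oplus\mathbf{1})$ in the FKM case). So the paper's proof is doing double duty, establishing both stable parallelizability and the sphere-bundle model needed later; your proof is cleaner for Proposition~\ref{pi-manifold} in isolation but does not set up that machinery.
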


It is worth pointing out that the condition ``$D(S^n_+)$ is stably
parallelizable" does not imply the conclusion $\alpha(D(S^n_+))=0$.
For instance, as Kervaire-Milnor ( Theorem 3.1 in \cite{KM}) proved,
every homotopy sphere is a $\pi$-manifold. But as we stated before,
there do exist some $8k+1$ and $8k+2$ dimensional exotic spheres
with non-vanishing $KO$-characteristic number $\alpha$. \vspace{4mm}

For isoparametric hypersurfaces in unit spheres, taking the different values of $g$ into account, we know:

When $g = 1$, an isoparametric hypersurface must be a great or small
sphere. Thus the double construction is trivial, namely, $D(S^n_+) \cong S^n$.

When $g = 2$, an isoparametric hypersurface must be a standard product of two spheres $S^k(r)\times S^{n-k-1}(s)$ with $r^2+s^2=1$.
Thus $D(S^n_+) \cong S^k \times S^{n-k}$ or $S^{k+1} \times S^{n-k-1}$.

When $g = 3$, E. Cartan has classified the isoparametric hypersurfaces. In fact, they are all homogeneous (see, for example, \cite{CR}).

When $g = 4$, except for the unknown case $(m_+, m_-)=(7, 8)$ (or $(8, 7)$), all the isoparametric hypersurfaces are either FKM-type or homogeneous
(\emph{cf}. \cite{CCJ}, \cite{Chi}).

When $g = 6$, all the isoparametric hypersurfaces must be homogeneous (see, for example, \cite{Miy09}).

\vspace{3mm}

Given all these classifications, in order to study the properties of
the double manifold $D(S^n_+)$, it suffices to consider the cases
that $Y$ is either homogeneous or of the FKM-type, except for the
case $(g, m_+, m_-)=(4, 7, 8)$. Therefore, we divide our research
into two parts, one is on the homogeneous case, and the other is on
the FKM-type. \vspace{3mm}

We begin by recalling a well known result that homogeneous hypersurfaces in $S^n$ are isoparametric since they have constant
principal curvatures. They have been characterized as principal orbits of the isotropy representation of rank two
symmetric spaces, and are classified completely by Hsiang and Lawson (\emph{cf.} \cite{HL}, \cite{TT}). From the corresponding
cohomogeneity one action on
$S^n$ with a certain slice representation of the normal disc, we derive a cohomogeneity one action on $D(S^n_+)$. In terms of
the isotropy subgroup $K_0$ of the principal orbit and $K_\pm$ of the singular orbits (focal submanifolds) $M_\pm$,
we classify $D(S^n_+)$ in Section $3$ with respect to the homogeneous hypersurface $Y$.

In particular, we investigate $D(S^4_+)$ in the case $(g,m_+,m_-)=(3,1,1)$, finding an interesting phenomenon that
$D(S^4_+)\cong S^2\times S^2/\sigma$,
where $\sigma$ is an involution different from that of the oriented Grassmannian
$G_2(\mathbb{R}^4)\cong S^2\times S^2/\sim $.
\vspace{3mm}

Next, we turn to the FKM-type. For every orthogonal representation
of the Clifford algebra $\mathcal{C}_{m-1}$ on $\mathbb{R}^l$,
Ferus, Karcher and M\"{u}nzner (\cite{FKM}) constructed a
homogeneous polynomial $F$ on $\mathbb{R}^{2l}$. The level
hypersurfaces of $F|_{S^{2l-1}}$ are isoparametric in $S^{2l-1}$
with $g=4$ and multiplicities of distinct principal curvatures
$(m_+,m_-,m_+,m_-)=(m,l-m-1,m,l-m-1)$. If $m\not \equiv 0 $
$(\mod~4)$, $F$ is determined by $m$ and $l$ up to a rigid motion of
$S^{2l-1}$; if, however $m\equiv 0$ $(\mod~4)$, there are
inequivalent representations of $\mathcal{C}_{m-1}$ on
$\mathbb{R}^l$ parameterized by an integer $q$, the index of the
representation (\emph{cf}. \cite{Wan}). In the second case, denote
by $M_+(m,l,q)$, $M_-(m,l,q)$ the corresponding focal submanifolds,
respectively. According to \cite{FKM}, $M_+$ has a trivial normal
bundle, while $M_-$ is diffeomorphic to an $S^{l-1}$ bundle over
$S^m$. Thus $D(S^{2l-1}_+)\cong M_+\times S^{m+1}$. As for
$D(S^{2l-1}_-)$, a delicate calculation of the topology on a sphere
bundle over $M_-$ leads to the following

\begin{thm}\label{FKM}
Given an odd prime $p$. If $q_1\not \equiv \pm q_2$ $(\mod~p)$, then
$D(S^n_-)(m,l,q_1)$ and $D(S^n_-)(m,l,q_2)$ have different homotopy
types.
\end{thm}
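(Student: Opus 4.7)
The plan is to realise $D(S^n_-)(m,l,q)$ as an explicit sphere bundle over the focal submanifold $M_-(m,l,q)$ and then use $K$-theory with mod-$p$ coefficients to detect the index $q$ up to the intrinsic symmetry $q\mapsto -q$.

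First, since $S^{2l-1}_-$ is a closed tubular neighbourhood of $M_-$ in $S^{2l-1}$, it is diffeomorphic to the closed unit disk bundle of the normal bundle $\nu_{M_-}$, which has rank $l-m$. Doubling along the boundary $Y$ therefore realises $D(S^{2l-1}_-)$ as the total space of the unit sphere bundle
$$D(S^{2l-1}_-) \;\cong\; S\bigl(\nu_{M_-} \oplus \varepsilon^1\bigr),$$
where $\varepsilon^1$ denotes the trivial line bundle. This reduces the problem to distinguishing total spaces of rank-$(l-m+1)$ sphere bundles over the various $M_-(m,l,q)$. I would begin by expressing the classifying map of $\nu_{M_-}$ directly in terms of the Clifford representation $(P_0,\ldots,P_m)$ that enters the FKM construction.

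Next, recall that $M_-(m,l,q)$ is itself an $S^{l-1}$-bundle over $S^m$, whose clutching element in $\pi_{m-1}(SO(l))$ depends linearly on $q$. Combined with the previous step this gives a tower of fibrations
$$S^{l-m} \;\to\; D(S^n_-)(m,l,q) \;\to\; M_-(m,l,q) \;\to\; S^m,$$
and $D(S^n_-)(m,l,q)$ is assembled from two pieces of clutching data, both built from the same Clifford representation and both linear in $q$. Using this I would compute $K^*\bigl(D(S^n_-)\bigr)\otimes \mathbb{Z}/p$ via Leray-Hirsch or the Atiyah-Hirzebruch spectral sequence, and then apply the Adams operations $\psi^k$ to a distinguished generator to extract a mod-$p$ numerical invariant.

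The main obstacle will be to establish that this mod-$p$ invariant depends on $q$ \emph{quadratically}. Once proved, it automatically factors through $q^{2}\bmod p$, and hence agrees on $q_1$ and $q_2$ exactly when $q_1\equiv \pm q_2\pmod p$. The quadratic behaviour is expected because two independent characteristic classes contribute to the invariant --- one coming from the base bundle $S^{l-1}\to M_-\to S^m$ and one from the fibre bundle $S^{l-m}\to D(S^n_-)\to M_-$ --- each linear in $q$, so that their product contributes a term quadratic in $q$, while all lower order contributions can be killed after reducing modulo $p$. I would verify this by the splitting principle together with explicit manipulation of Thom classes in $K$-theory with $\mathbb{Z}/p$-coefficients. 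Given this, if $q_1\not\equiv \pm q_2\pmod p$ then $q_1^{2}\not\equiv q_2^{2}\pmod p$, the invariant separates $D(S^n_-)(m,l,q_1)$ from $D(S^n_-)(m,l,q_2)$, and since $K$-theory is a homotopy functor the two doubles cannot be homotopy equivalent, completing the proof.
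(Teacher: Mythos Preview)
Your identification $D(S^{2l-1}_-)\cong S(\nu_{M_-}\oplus\varepsilon^1)$ agrees with the paper. From there, however, the paper takes a different and more concrete route, and your plan contains a real gap.

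The paper does not use $K$-theory or Adams operations; it works in ordinary cohomology with $\mathbb Z_p$ coefficients and the Steenrod reduced power $\mathcal P^1$. Writing $\zeta=\nu_-\oplus\mathbf 1$, the Wu formula identifies $\Phi^{-1}\mathcal P^1\Phi(1)$ with a polynomial in the Pontrjagin classes of $\zeta$, and since $\nu_-\oplus TM_-$ is trivial while $TM_-$ is stably $\pi_1^*\xi$, one gets $p_{m/4}(\zeta)=-\pi_1^*p_{m/4}(\xi)$. A short $K$-theory computation of $p_{m/4}(\xi)$ then gives a result \emph{linear} in $q$, namely $-q\cdot\beta(m)\cdot(\tfrac m2-1)!\cdot\pi_1^*\gamma$.

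This is where your proposal goes wrong. Your heuristic that ``two clutchings, each linear in $q$, multiply to give $q^2$'' fails because the two bundles in your tower are not independent: $\nu_{M_-}$ is stably the \emph{inverse} of $\pi_1^*\xi$, so only one $q$-linear characteristic class survives and no genuine product of two $q$-linear quantities ever appears. The $\pm q$ ambiguity in the theorem has a completely different source: the integral groups $H^{l-m}$ and $H^l$ of $D(S^n_-)$ are torsion-free of rank one, so any homotopy equivalence acts on their mod-$p$ reductions by $\pm 1$, and hence the operation $\mathcal P^1:H^{l-m}\to H^l$ (which the paper shows equals multiplication by a unit times $q$) is a homotopy invariant only up to an overall sign. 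That is what forces $q_1\equiv\pm q_2\pmod p$, not any quadratic expression.

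There is a second omission. You never relate $p$ to $m$, but the paper's argument requires $m=2(p-1)$: this is precisely when $\mathcal P^1$ raises degree by $m$, and Wilson's theorem $(p-1)!\equiv -1\pmod p$ is then used to turn the awkward factor $(\tfrac m2-1)!$ into a unit modulo $p$. Any Adams-operation version would need an analogous numerical coincidence linking $p$ to the dimensions, and your proposal does not identify one.
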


As we claimed in Proposition \ref{pi-manifold}, $D(S^n_+)$ is a
$\pi$-manifold with vanishing Stiefel-Whitney classes and Pontrjagin
classes. Without the aid of characteristic classes, it is usually
not easy to distinguish the homeomorphism classes. Our Theorem
\ref{FKM} is established by using $\mod~p$ cohomology operators.


\section{Geometry of the double manifold $D(S^n_+)$}

This section will be devoted to the proof of Theorem \ref{isop1}. We prefer to prove this result by making use of
fundamental properties of isoparametric hypersurfaces and some straightforward verifications.

Let $Y^{n-1}$ be a minimal isoparametric hypersurface in the unit sphere $S^n(1)$. It is well known that $Y$ is a
level hypersurface with vanishing mean curvature of an isoparametric function $f$ on $S^n(1)$. By an isoparametric function on $S^n(1)$,
we mean a function $f: S^n(1)\rightarrow \mathbb{R}$
satisfying:
\begin{equation}\label{ab}
\left\{ \begin{array}{ll}
|\nabla f|^2= b(f)\\
\quad\triangle f=a(f)
\end{array}\right.
\end{equation}
where $\nabla f$ is the gradient of $f$, $\triangle f$ is the
Laplacian of $f$, $b$ is a smooth function on $\mathbb{R}$, and $a$ is a continuous function on $\mathbb{R}$
(see \cite{Tho}, for an excellent survey).
We require that the isoparametric function is proper (\emph{cf}. \cite{GT2}) so that both focal submanifolds have
codimensions greater than $1$.

Recall that an isoparametric hypersurface $Y^{n-1}$ in $S^n(1)$ has constant principal curvatures, which
we denote by $\mu_1(0), \mu_2(0),...,\mu_{n-1}(0)$ as before corresponding to the unit normal vector field $\xi=\frac{\nabla f}{|\nabla f|}$.
A key reason for choosing $Y^{n-1}$ to be minimal isoparametric is that, as we will see, its induced metric
from $S^n(1)$ has positive scalar curvature.

By Gauss equation, for a closed minimal hypersurface $N$ in a unit sphere $S^n(1)$, $$S=(n-1)(n-2)-R_N$$ where $S$ is square of the length of
the second fundamental form. If, in addition, $N$ is a minimal isoparametric hypersurface on $S^n$, Peng and Terng (\cite{PT}) asserted that:
$$S=(g-1)(n-1),$$
which implies $R_N\geq 0$, and ``=" is achieved if and only if $(m_+, m_-)=(1,1)$ since $n-1=\frac{g}{2}(m_++m_-)$.

\vspace{5mm}

It follows immediately that the minimal isoparametric hypersurface $Y$ has $R_Y\geq 0$, $H(0)={\displaystyle\sum_{i=1}^{n-1}\mu_i(0)}=0$, and $S=\displaystyle\sum_{i=1}^{n-1}\mu_i^2(0)=(g-1)(n-1)$,
which imply that $$2\sum_{i< j}^{n-1}\mu_i\mu_j|_Y=H(0)^2-\sum_{i=1}^{n-1}\mu_i^2|_Y=-(g-1)(n-1).$$
By definition in formula $(\ref{scalar})$, we see $A=\displaystyle\sum_{i< j\leq n-1}\mu_i\mu_j -(n-1)$. In order to simplify the
calculation of $R_M$, we set
$$a(r):=2\sum_{i<j}^{n-1}\mu_i\mu_j|_{Y_r}-2\sum_{i<j}^{n-1}\mu_i\mu_j|_{Y}=2\sum_{i<j}^{n-1}\mu_i\mu_j|_{Y_r}+(g-1)(n-1).$$
Since
\begin{displaymath}
\lim_{r\rightarrow 0}2\sum_{i<j}^{n-1}\mu_i\mu_j|_{Y_r}=\lim_{r\rightarrow 0}\Big(H(r)^2-\sum_{i=1}^{n-1}\mu_i^2|_{Y_r}\Big)=-(g-1)(n-1),
\end{displaymath}
we have
\begin{equation}\label{a(r)}
\lim_{r \to 0}a(r)=0
\end{equation}

In fact, according to the Bochner-Weitzenb\"{o}ck formula:
\begin{equation*}
\frac{1}{2}\triangle|\nabla f|^2=|Hess f|^2+\langle\nabla f, \nabla(\triangle f)\rangle + Ric(\nabla f, \nabla f),
\end{equation*}
by virtue of the expression of Hessian of $f$ (\emph{cf}. \cite{GTY}):
\begin{equation*}
Hess f=diag\Big(-\sqrt{b(f)}\mu_1,\cdots,-\sqrt{b(f)}\mu_{n-1},
b'(f)/2\Big),
\end{equation*}
with $b(f)=g^2(1-f^2)$ in formula (\ref{ab}) by the famous Cartan-M\"{u}nzner equations, we obtain
\begin{equation}\label{sum of mu}
\sum_{i=1}^{n-1}\mu_i^2|_{Y_r}=\frac{(n-1)(g-1)-cf+(n-1)f^2}{1-f^2}\quad \emph{with}\quad c=\frac{g^2(m_--m_+)}{2}.
\end{equation}
Hence we can express $H(r)$ explicitly as:
\begin{equation}\label{Hr}
H(r)=(n-1)\frac{f|_{Y_r}}{\sqrt{1-f^2|_{Y_r}}}-\frac{c}{g\sqrt{1-f^2|_{Y_r}}}.
\end{equation}
It follows immediately that $H(0)=0$ and $H(r)>0$ for any $r>0$.

Consequently, from the definition of $a(r)$ and (\ref{sum of mu}), (\ref{Hr}),
it follows that
\begin{eqnarray}\label{exact a(r)}
&&a(r)\\
&=&~H^2(r)-\sum_{i=1}^{n-1}\mu_i^2|_{Y_r}+(g-1)(n-1)\nonumber\\
&=&~(g-1)(n-1)+\frac{1}{1-f^2}\{(n-1)^2f^2-\frac{2c}{g}(n-1)f+\frac{c^2}{g^2}-(n-1)f^2+cf\nonumber\\
& &\quad-(g-1)(n-1)\},\nonumber
\end{eqnarray}

Substituting all these equalities in $(\ref{scalar})$, we get immediately
\begin{equation}\label{scalarYr}
R_{M}|_{Y_r}=n(n-1)\cos^2\theta+(n-g-1)(n-1)\sin^2\theta+a(r)\sin^2\theta+2kH(r)\sin\theta
\end{equation}
with $H(r)$ and $a(r)$ in (\ref{Hr}), (\ref{exact a(r)}), respectively.
\vspace{4mm}

Since we have the dimension relation $n-1=\frac{g}{2}(m_++m_-)$, it suffices to analyze
the following two cases for our destination.
\vspace{4mm}

\noindent
\emph{$(A)$}: $(m_+, m_-)=(1, 1)$.

This is just the case that $n-g-1=0$. Since $a(r)$ is identically $0$ in this case, (\ref{scalarYr}) becomes
$$R_M=n(n-1)\cos^2\theta+2kH(r)\sin\theta.$$
By controlling the ``bending" angle of the curve $\gamma$, we can
assume $0\leq k\leq \frac{1}{2}$ so that
$R_M|_{Y_r}=n(n-1)\cos^2\theta+2kH(r)\sin\theta \geq 0,$ and ``=" is
achieved if and only if $r=0$. \vspace{6mm}

\noindent
\emph{$(B)$}: $Max\{m_+, m_-\}\geq 2$.

In this case, $n-g-1>0$. For $\theta \in [0,\frac{\pi}{2}]$, it is easily seen that
$$Min\{n(n-1)\cos^2\theta+(n-g-1)(n-1)\sin^2\theta\}=(n-g-1)(n-1),$$ thus by (\ref{scalarYr}),
$$R_M\geq (n-g-1)(n-1) + a(r)\sin^2\theta + 2kH(r)\sin\theta.$$
With the same assumption on $k$ as in case $(A)$, $R_M$ has a positive lower bound.

Up to now, we changed only the metric near the minimal isoparametric
hypersurface $Y$ along the curve $\gamma$ into a product metric
while preserving the positive scalar curvature, as desired. In this
way, gluing two copies of $S^n_+$, we get the double manifold of
positive scalar curvature. More importantly, there is still an
isoparametric foliation on $D(S^n_+)$, remaining the same with that
in $S^n(1)$ as $r\geq r_1$. In a neighborhood of $Y$ with diameter
$2r_1$, the principal curvatures turn out to be $\mu_1\cos\theta,
\mu_2\cos\theta,...,\mu_{n-1}\cos\theta$.

The proof is now complete.\hfill $\Box$


\section{Topology of the double manifold $D(S^n_+)$}

First of all, we compute the cohomology groups.
\begin{prop}\label{homology}
Let the ring of coefficient $R=\mathbb{Z}$ if $M_+$ and $M_-$ are both orientable and $R=\mathbb{Z}_2$ otherwise. Then
\begin{displaymath}
\left\{ \begin{array}{ll}
H^0(D(S^n_+))=R & \textrm{}\\
H^1(D(S^n_+))=H^1(M_+) & \textrm{}\\
H^q(D(S^n_+))=H^{q-1}(M_-) \oplus H^q(M_+) & \textrm{ for $2 \leq q \leq n-2$ }\\
H^{n-1}(D(S^n_+))=H^{n-2}(M_-)\\
H^n(D(S^n_+))=R
\end{array} \right.
\end{displaymath}
For $D(S^n_-)$, analogous identities hold.\hfill $\Box$
\end{prop}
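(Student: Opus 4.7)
The plan is to exhibit $D(S^n_+)$ as a sphere bundle over $M_+$ with a nowhere-vanishing section, extract its cohomology from the Gysin sequence, and then convert the Gysin formula into the stated form via Alexander--Poincar\'e duality between the two focal submanifolds.

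First I would observe that, by the isoparametric foliation on $S^n$, the region $S^n_+$ is precisely the closed normal tubular neighborhood of $M_+$ up to the focal radius, so $S^n_+$ is diffeomorphic to the closed disk bundle $D(\nu_{M_+})$, with boundary $Y = S(\nu_{M_+})$ (the unit normal sphere bundle, fiber $S^{m_+}$). Consequently,
\[
D(S^n_+) \;\cong\; D(\nu_{M_+}) \cup_{S(\nu_{M_+})} D(\nu_{M_+}) \;\cong\; S(\nu_{M_+} \oplus \underline{\mathbb{R}}),
\]
an $S^{m_+ + 1}$-bundle over $M_+$. The trivial line summand supplies a nowhere-vanishing section, so the Euler class of this bundle vanishes and the Gysin sequence decomposes into split short exact sequences, giving
\[
H^q(D(S^n_+); R) \;\cong\; H^q(M_+; R) \oplus H^{q - m_+ - 1}(M_+; R).
\]

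Next I would rewrite the second summand in terms of $M_-$. The isoparametric foliation provides a deformation retraction $S^n \setminus M_- \simeq M_+$ along $-\nabla f$, so Alexander duality in $S^n$ applied to $M_-$ gives $\tilde{H}^{q-1}(M_-; R) \cong \tilde{H}_{n - q}(M_+; R)$. Combined with Poincar\'e duality on $M_+$ (closed of dimension $n - 1 - m_+$, orientable when $R = \mathbb{Z}$), the right-hand side becomes $H^{q - m_+ - 1}(M_+; R)$, valid for $2 \leq q \leq n - 2$. Substituting back produces the stated formula in the middle range. The boundary values $q = 0, 1, n-1, n$ follow directly by inspecting which summands in the Gysin decomposition survive, using the dimension constraint $\dim M_\pm = n - 1 - m_\pm$ together with the standing assumption $m_\pm \geq 1$ (from the properness of the isoparametric function).

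The main subtlety will be tracking the orientability hypotheses hidden in the coefficient ring. The Gysin sequence splits over $\mathbb{Z}$ only when the sphere bundle $S(\nu_{M_+} \oplus \underline{\mathbb{R}})$ is orientable, which (since $S^n$ is) reduces to orientability of $M_+$, and Poincar\'e duality over $\mathbb{Z}$ also requires $M_+$ orientable. The symmetric argument for $D(S^n_-)$ demands $M_-$ orientable, so integer coefficients require both focal submanifolds to be orientable; otherwise one passes to $\mathbb{Z}_2$, matching the choice of $R$ in the statement.
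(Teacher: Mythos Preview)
Your argument is correct but follows a genuinely different route from the paper. The paper's proof quotes M\"unzner's theorem, which gives $H^q(Y)\cong H^q(M_+)\oplus H^q(M_-)$ for $1\le q\le n-2$, and then runs the Mayer--Vietoris sequence for the cover $D(S^n_+)=S^n_+\cup_Y S^n_+$ with $S^n_+\simeq M_+$; the $M_-$ summand in the answer comes directly from M\"unzner's splitting of $H^*(Y)$. You instead use the identification $D(S^n_+)\cong S(\nu_+\oplus\mathbf{1})$ (which the paper only invokes later, for Proposition~1.1), apply the Gysin sequence with vanishing Euler class, and then recover the $H^{q-1}(M_-)$ term from $H^{q-m_+-1}(M_+)$ via Alexander duality in $S^n$ combined with Poincar\'e duality on $M_+$. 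Your approach avoids relying on M\"unzner's cohomology computation and makes transparent why the sequence splits (the section from the trivial summand); it also shows that for $D(S^n_+)$ alone, orientability of $M_+$ suffices for $R=\mathbb{Z}$. The paper's approach is shorter once M\"unzner's theorem is at hand and produces the $M_-$ contribution without the extra duality step.
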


\begin{rem}\label{ori}
By Morse theory, we see that if $m_+>1$ (\emph{resp.}~$m_->1$), then $M_-$ (\emph{resp.}~$M_+$) is orientable.
In fact, we define a spherical distance function on the focal submanifold $M_-$.
\begin{eqnarray*}
&&L_p:~M_-\longrightarrow \mathbb{R}\\
&&\qquad\quad x\mapsto \cos^{-1}\langle p, x\rangle
\end{eqnarray*}
where $p$ belongs to the complement of $M_{\pm}$ in $S^n$. The Morse index theorem states that the index of $L_p$ at
a non-degenerate critical point $x$ equals the number of focal points (counting multiplicities ) of $(M_-, x)$ on
the shortest geodesic segment from $p$ to $x$. Immediately, we obtain, for example when $g=4$, the index of non-degenerate
critical points are $0$, $m_+$, $m_++m_-$ and $2m_++m_-$, respectively. Consequently, we have the cell decomposition $M_-=S^{m_+}\bigcup e^{m_++m_-}\bigcup e^{2m_++m_-}$.
Thus if $m_+>1$, $M_-$ is simply connected. Similar results hold for other values of $g$.
\end{rem}

In order to prove Proposition \ref{homology}, we recall
a topological theorem of M\"{u}nzner (\emph{cf.} \cite{Mu80}) stated as

\noindent
\textbf{Theorem}\,\,
{\itshape
Let $N$ be a compact connected hypersurface in $S^{n}$ such that:

$(a)$ $S^{n}$ is divided into two manifolds $B_+$ and $B_-$ with the same boundary $N$.

$(b)$ $B_+$ $(resp.~ B_-)$ has the structure of a disc bundle over a compact manifold $M_+$ $(resp.~ M_-)$
of dimension $n-1-m_+$ $(resp.~ n-1-m_-)$.

Let the ring of coefficient $R=\mathbb{Z}$ if $M_+$ and $M_-$ are both orientable and $R=\mathbb{Z}_2$, otherwise. Let $\nu=m_+ + m_-$.
Then
\begin{displaymath}
H^q(M_\pm) = \left\{ \begin{array}{ll}
R, & \textrm{for $q\equiv 0$ $(\mod ~\nu)$, $0 \leq q < n-1$}\\
R, & \textrm{for $q\equiv m_\mp$ $(\mod ~\nu)$, $0 \leq q < n-1$}\\
0, & \textrm{otherwise }
\end{array} \right.
\end{displaymath}

Further,
\begin{displaymath}
H^q(N) = \left\{ \begin{array}{ll}
R, & \textrm{for $q=0, n-1$}\\
H^q(M_+) \oplus H^q(M_-), & \textrm{for $1 \leq q \leq n-2$}
\end{array} \right.
\end{displaymath}

}\hfill $\Box$

To complete the proof of Proposition \ref{homology}, we observe that a
minimal isoparametric hypersurface $Y$ in $S^n$ satisfies the hypotheses of the previous
theorem, getting the cohomology groups $H^q(M_\pm)$, equivalently, $H^q(S^n_\pm)$. Finally, by the Mayer-Vietoris sequence
of $(D(S^n_+), S^n_+, S^n_+)$, we arrive at the conclusion immediately.\hfill $\Box$
\vspace{3mm}

Next, we give a proof of

\noindent
\textbf{Proposition 1.1}\,\,
{
$D(S^n_+)$ (\emph{resp}.$D(S^n_-)$) is a $\pi$-manifold, \emph{i.e.}, a stably parallelizable manifold. In
particular, it is an orientable, spin manifold with vanishing Stiefel-Whitney
classes and Pontrjagin classes.
}

Suppose we are now given a (minimal) isoparametric hypersurface in $S^n$. As M\"{u}nzner asserted (\emph{cf}. \cite{CR} p.283),
$S^n_+$ has the structure of a differential disc bundle over
$M_+$. In fact, it is the normal disc bundle over $M_+$. More precisely, we have
\begin{eqnarray}\label{normal disc bundle}
&& B^{m_++1} \hookrightarrow S^n_+ =B(\nu_+)\\
&&\qquad\qquad\quad \downarrow \pi\nonumber\\
&&\qquad\quad\quad\quad M_+\nonumber
\end{eqnarray}
where $\nu_+$ is the normal bundle over $M_+$, $B^{m_++1}$ is the fiber disc.

Since $S^n_+$ has a metric, we can define a homeomorphism as:
\begin{eqnarray*}
&&B^n_1\sqcup_{id}B^n_2 \longrightarrow S(\nu_+\oplus\textbf{1})\\
&&\qquad \quad e \longmapsto \left\{ \begin{array}{ll}
(e, \sqrt{1-|e|^2}), & \textrm{for $e \in B^n_1$}\\
(e, -\sqrt{1-|e|^2}), & \textrm{for $e \in B^n_2$}
\end{array} \right.
\end{eqnarray*}
where $B^n_1$, $B^n_2$ are two copies of $S^n_+=B(\nu_+)$, $S(\nu_+\oplus \textbf{1})$ is a sphere bundle of the Whitney sum $\nu_+\oplus \textbf{1}$, here $\textbf{1}$ is a trivial line bundle over $M_+$.

As a result, we get a new bundle
\begin{eqnarray*}
&& S^{m_++1} \hookrightarrow D(S^n_+)\\
&&\qquad\qquad\quad \downarrow \rho\\
&&\qquad\quad\quad\quad M_+
\end{eqnarray*}
and a correspondence $D(S^n_+)\cong S(\nu_+\oplus \textbf{1})$. It follows immediately that
\begin{eqnarray*}
T(D(S^n_+))\oplus \textbf{1} &\cong& T(S(\nu_+\oplus \textbf{1}))\oplus \textbf{1} \\
&\cong& \rho^*TM_+\oplus \rho^*(\nu_+\oplus\textbf{1})\\
&\cong& \rho^*{j^*_+}TS^n\oplus\textbf{1}\cong (\textbf{n+1})
\end{eqnarray*}
where $j_+: M_+\rightarrow S^n$ is an inclusion.
In other words, $D(S^n_+)$ is stably parallelizable, \emph{i.e.}, a $\pi$-manifold. This completes the proof.\hfill $\Box$


As indicated in Introduction, we will be mainly concerned with the minimal isoparametric hypersurface $Y$ in the following two cases:
homogeneous hypersurface and FKM-type.
\subsection{Homogeneous hypersurface}

Let $Y$ be a homogeneous hypersurface in $S^n$, as Hsiang and Lawson(\cite{HL}) showed: $Y$
can be characterized as a principal orbit of the isotropy representation of some rank two symmetric space $U/K$.

To begin with, we provide a brief description of the corresponding rank two symmetric spaces. Again, let $g$ be
the number of distinct principal curvatures of the homogeneous hypersurface $Y$. As mentioned before, $g$ can
only be $1$, $2$, $3$, $4$ or $6$.

When $g = 1$, $Y$ is a hypersphere in $S^n$, the corresponding rank two symmetric space is
$$(S^1\times SO(n + 1))/SO(n) = S^1\times S^n.$$

When $g = 2$, $Y$ is a Riemannian product of two spheres $S^k(r)\times S^{n-k-1}(s)$ with $r^2+s^2=1$, $1\leq k\leq n-2$,
the corresponding rank two symmetric space is $$(SO(k + 2)\times SO(n- k + 1))/(SO(k + 1)\times SO(n-k)) = S^{k+1}\times S^{n-k}.$$

When $g = 3$, $Y$ is congruent to a tube of constant radius around the Veronese embedding of real projective
plane $\mathbb{R}P^2$ into $S^4$, or complex projective plane $\mathbb{C}P^2$ into $S^7$, or quaternionic
projective plane $\mathbb{H}P^2$ into $S^{13}$, or Cayley projective plane $\mathbb{O}P^2$ into $S^{25}$.
The corresponding rank two symmetric spaces are
$$SU(3)/SO(3);~~ SU(3)\times SU(3)/SU(3);~~ SU(6)/Sp(3);~~ E_6/F_4.$$

When $g = 4$, $Y$ is a principal orbit of the isotropy representation of
$$SO(5)\times SO(5)/SO(5);~~ SO(10)/U(5);~~ E_6/T\cdot Spin(10);$$ or of two-plane Grassmannians
$$SO(k + 2)/SO(k)\times SO(2)~~(k\geq 3);$$ $$SU(k + 2)/S(U(k)\times U(2))~~(k \geq 3);$$ $$Sp(k + 2)/Sp(k)\times Sp(2)~~(k\geq 2).$$

When $g = 6$, $Y$ is a principal orbit of the isotropy representation of $$G_2/SO(4)~~ or ~~G_2\times G_2/G_2.$$

Now let $G$ be a closed subgroup of the isometry group of $S^n$ acting on $S^n$ with cohomogeneity one. We equip the
orbit space $S^n/G$ with the quotient topology relative to the canonical projection $S^n\rightarrow S^n/G$.
Since $n>1$, $S^n$ is simply connected and compact, for topological reasons $S^n/G$ must be homeomorphic to $[-1, 1]$
and each singular orbit has codimension greater than one.

We denote the singular orbits corresponding to $\pm 1$ by $M_{\pm}$, and their isotropy subgroups by $K_{\pm}$, respectively.
Naturally, there is a diffeomorphism $M_{\pm}\cong G/K_{\pm}$. The other orbits are congruent to each other,
and they are all principal orbits. It makes sense to fix an orbit $Y$ corresponding to a certain value in $(-1,1)$ so that $Y$ is minimal,
and denote its isotropy subgroup by $K_0$. The existence of such $Y$ is clear. Similarly, $Y\cong G/K_0$.

Based on the bundle structure of $S^n_+$ over $M_-$ and the following group action of $K_{\pm}$:
\begin{eqnarray*}
&& K_{\pm}\times(G\times B^{m_++1}_{\pm})\longrightarrow G\times B^{m_++1}_{\pm}\\
&&\qquad\qquad\quad~~(k,g,x)\longmapsto (gk^{-1}, k\bullet x)
\end{eqnarray*}
where $\bullet$ is a slice representation (for details, see for example \cite{Bre}),
we decompose $S^n$ into $$S^n=G\times_{K_+}B^{m_++1}_+\cup_{Y}G\times_{K_-}B^{m_-+1}_-.$$

Next, by gluing two copies of $S^n_+$, we define a new action of the isotropy group $K_+$ on $G\times S^{m_++1}$ :
\begin{eqnarray*}
&&K_+\times(G\times S^{m_++1})\longrightarrow G\times S^{m_++1}\\
&&\qquad(k,g,(x,t))\longmapsto (gk^{-1},k\star(x,t))
\end{eqnarray*}
where $k\star(x,t):=(k\bullet x,t)$, $t=\pm\sqrt{1-|x|^2}$, $(x,t)\in S^{m_++1}$.

Consequently, we have the diffeomorphism: $$D(S^n_+)\cong G\times_{K_+}B^{m_++1}\cup_{Y}G\times_{K_+}B^{m_++1}\cong G\times S^{m_++1}/K_+.$$

In conclusion, $D(S^n_+)$ can be determined by $K_+$ and its action on $G\times S^{m_++1}$. Similarly, we can also express $D(S^n_-)$ in this way.

A series of delicate calculations lead us to a complete list of the
isotropy subgroups $K_0$ and $K_{\pm}$ of homogeneous hypersurfaces
and focal submanifolds as follows. To the best of our knowledge, the
determinations of $K_+$ and $K_-$ have not previously appeared in
the literature. The main difficult occurred in the calculation of
exceptional Lie groups.

\newpage

\begin{center}
Homogeneous (isoparametric) hypersurfaces in the unit sphere.
{\footnotesize

\begin{tabular}{|c|c|l|c|c|c|}
\hline
g & $(m_{+}, m_-)$ & (U, K) & $K_0$ & $K_+$ & $K_-$\\
\hline
\hline
1 & $n-1$ & ($S^1\times SO(n+1)$, $SO(n)$) & $SO(n-1)$ & $SO(n)$ & $SO(n)$\\
  &   & $n\geq 2$    & & &\\
2 & $(p, q)$ & ($SO(p+2)\times SO(q+2)$, & $SO(p)\times SO(q)$ & $SO(p+1)\times SO(q)$  & $SO(p)\times SO(q+1)$\\
  &          & $SO(p+1)\times SO(q+1)$) & & & \\
  &          &  $ p,q\geq 1$ & & &\\
3 & (1, 1)   & $(SU(3), SO(3))$ & $\mathbb{Z}_2+ \mathbb{Z}_2$ & $S(O(2)\times O(1))$  & $S(O(1)\times O(2))$\\
3 & (2, 2)   & $(SU(3)\times SU(3), SU(3))$ & $T^2$ & $S(U(2)\times U(1))$ & $S(U(1)\times U(2))$ \\
3 & (4, 4)   & $(SU(6), Sp(3))$ & $Sp(1)^3$ & $Sp(2)\times Sp(1)$ & $Sp(2)\times Sp(1)$\\
3 & (8, 8)   & $(E_6, F_4)$ & $Spin(8)$ & $Spin(9)$ & $Spin(9)$\\
4 & (2, 2)   & $(SO(5)\times SO(5), SO(5))$ & $T^2$ & $SO(2)\times SO(3)$ & $U(2)$\\
4 & (4, 5)   & $(SO(10), U(5))$ & $SU(2)^2\times U(1)$ & $Sp(2)\times U(1)$ & $SU(2)\times U(3)$\\
4 & (6, 9)   & $(E_6, T\cdot Spin(10))$ & $U(1)\cdot Spin(6)$ & $U(1)\cdot Spin(7) $ & $S^1\cdot SU(5)$\\
4 & (1, m-2) & $(SO(m+2), SO(m)\times SO(2))$ & $SO(m-2)\times \mathbb{Z}_2$ & $SO(m-2)\times SO(2)$ & $O(m-1)$\\
  &          &  $m\geq 3$ & & &\\
4 & (2, 2m-3)& $(SU(m+2), S(U(m)\times U(2)))$ & $S(U(m-2)\times T^2)$ & $S(U(m-2)\times U(2))$ & $S(U(m-1)\times T^2)$\\
  &          &  $m\geq 3$ & & &\\
4 & (4, 4m-5)& $(Sp(m+2), Sp(m)\times Sp(2))$ & $Sp(m-2)\times Sp(1)^2$ & $Sp(m-2)\times Sp(2)$ & $Sp(m-1)\times Sp(1)^2$\\
  &          &  $m\geq 2$ & & &\\
6 & (1, 1)   & $(G_2, SO(4))$ & $\mathbb{Z}_2+ \mathbb{Z}_2$ & $O(2)$ & $O(2)$\\
6 & (2, 2)   & $(G_2\times G_2, G_2)$ & $T^2$ & $U(2)$ & $U(2)$\\
\hline
\end{tabular}}
\end{center}
\vspace{1cm}

In the following, we first illustrate the calculations of $K_0, K_+,
K_-$ in the case of the symmetric pair $(E_6, T\cdot Spin(10))$ with
$(g,m_+,m_-)=(4,6,9)$,  and then give an example of the case
$(SU(3), SO(3))$ with $(g,m_+,m_-)=(3,1,1)$.


\begin{exa}\label{469} \textbf{The calculation of $K_+, K_-$ of the symmetric pair $(E_6, T\cdot Spin(10))$ with $(g,m_+,m_-)=(4,6,9)$.}
\vspace{2mm}

At the beginning, we introduce some notations and operations on the division Cayley algebra $\Ca$,
which is generated by $\{e_0=1, e_1,\cdots,e_7\}$ and satisfies
\begin{enumerate}
  \item For $i>0$, $e_i^2=-1$;
  \item For $i,j>0$, $i\neq j$, $e_ie_j=-e_je_i$;
  \item $e_1e_2=e_4$;
  \item If $e_ie_j=e_k$ for some $i,j,k>0$, then $e_{i+1}e_{j+1}=e_{k+1}$ and $e_{2i}e_{2j}=e_{2k}$ (subscripts $\mod~7$).
\end{enumerate}

Let $M_3(\Ca)$ be the set of $3\times 3$ matrices with entries in $\Ca$,
$\EJ$ the set of Hermitian matrices in $M_3(\Ca)$, namely,  
\[  \EJ=\{ X\in M_3(\Ca) | \kg ^t\overline{X}=X      \}, \]
where the conjugate of any element $x=\sum_{i=0}^7x_ie_i\in \Ca$ is defined by
\begin{equation*}
\overline{x}=x_0e_0-\sum_{i=1}^7x_ie_i.
\end{equation*}

In the following, we will always denote an element $X\in\EJ$ of the form
\[    X=X(\xi,x)=\left(
          \begin{array}{ccc}
            \xi_1 & x_3 & \overline{x}_2 \\
            \overline{x}_3 & \xi_2 & x_1 \\
            x_2 & \overline{x}_1 & \xi_3 \\
          \end{array}
        \right), \kg \mbox{for}~~ \xi_i\in \R,~ x_i\in \Ca
      \]
by
\[   X=\xi_1 E_1+\xi_2 E_2+\xi_3 E_3+F_1(x_1)+F_2(x_2)+F_3(x_3).   \]
The Jordan product, as a basic operation in $\EJ$, is a
multiplication defined by
\[   X\circ Y=\frac{1}{2}(XY+YX), \kg \mathrm{for}~ X,Y\in \EJ.  \]
Usually, $(\EJ, \circ)$ is called the exceptional Jordan algebra. Moreover, the trace $\Tr{X} $, the inner product
$(X,Y)$ and the determinant $\det X$ can be defined respectively by
\begin{eqnarray*}
  &&\Tr{X}=\xi_1+\xi_2+\xi_3, \kg \mathrm{for}~ X=X(\xi,x),\\
  &&(X,Y)=\Tr{X\circ Y},\\
  && \det X=\xi_1\xi_2\xi_3+ \RE(x_1x_2x_3)-\xi_1|x_1|^2-\xi_2|x_2|^2-\xi_3|x_3|^2.
\end{eqnarray*}

Let $\EJ^{\C}=\{X_1+\sqrt{-1}X_2~|~ X_1,X_2\in \EJ\}$ be the complexification of Jordan algebra $\EJ$.
In the same manner, we have the Jordan product, the trace, the $\C$-linear form $(~,~)$ and
the determinant
in $\EJ^{\C}$. A Hermitian inner product $\langle~,~ \rangle$ on $\EJ^{\C}$ is given by
\[   \langle X,Y\rangle=(X,\tau Y),\kg \mbox{for}~X,Y\in \EJ^{\C},  \]
where $\tau$ is the complex conjugate of $\EJ^{\C}$.

With all these notations, an equivalent definition of the group $E_6$ can be given by (\emph{cf}. \cite{Yok})
\[  E_6  = \{ \alpha\in GL(\EJ^{\C},\C)  ~|~ \det (\alpha X)=\det X, \langle\alpha X,\alpha Y\rangle=\langle X,Y\rangle\}.\]
Set
\[  \AH=\{ A\in M_3(\Ca)|\kg ^t\overline{A}=-A,\Tr{A}=0 \}.    \]   
As above, we denote an element $A\in \AH$ of the form
\[ A=\left(
          \begin{array}{ccc}
            a_1 & x_3 & -\overline{x}_2 \\
            -\overline{x}_3 & a_2 & x_1 \\
            x_2 & -\overline{x}_1 & a_3 \\
          \end{array}
     \right), \kg a_i,x_i\in \Ca, \overline{a}_i=-a_i, a_1+a_2+a_3=0.
   \]
by
\[   A=a_1 E_1+a_2 E_2+a_3 E_3+A_1(x_1)+A_2(x_2)+A_3(x_3).   \]

Notice that  $[\AH,\AH]=\AH$, $[\AH,\EJ]=\EJ$. Thus any $A\in \AH$ induces a map $\tilde{A}:\EJ\rightarrow \EJ$ expressed as
\[   \tilde{A}(X)=\frac{1}{2}[A,X].   \]
Let ${\mathfrak{t}}'$ be the subalgebra of $gl(\EJ)$ generated by $\{\tilde{A}|A\in \AH\}$. Then ${\mathfrak{t}}'$
is isomorphic to the (compact) Lie algebra of $F_4$.

Furthermore, observing that any $X\in \EJ$ also induces a map $\tilde{X}:\EJ\rightarrow \EJ$ defined by
\[   \tilde{X}(Y)=X\circ Y,  \kg Y\in \EJ,   \]
we set ${\mathfrak{p}}'=\{\tilde{X}| X\in \EJ, \Tr{X}=0\}$, then the Lie algebra ${\mathfrak{t}}'+\sqrt{-1}{\mathfrak{p}}'$
is just the (compact) Lie algebra of $E_6$, denoted by $\mathfrak{e}_6$. In the following discussions, we will omit the symbol `$\sim$' for simplicity.

Let $\mathfrak{d}_4$ be the subalgebra of  ${\mathfrak{t}}'$ generated by  $\{ \Sigma a_i E_i | a_i\in\Ca, \overline{a}_i=-a_i, \Sigma a_i=0\}$.
Then for any $D \in \mathfrak{d}_4$, $X=X(\xi,x)$,
\[   D \left(
          \begin{array}{ccc}
            \xi_1 & x_3 & \overline{x}_2 \\
            \overline{x}_3 & \xi_2 & x_1 \\
            x_2 & \overline{x}_1 & \xi_3 \\
          \end{array}
        \right)=\left(
                      \begin{array}{ccc}
            0 & D_3x_3 & \overline{D_2x}_2 \\
            \overline{D_3x}_3 & 0 & D_1x_1 \\
            D_2x_2 & \overline{D_1x}_1 & 0 \\
          \end{array}
        \right),
   \]
where $D_1,D_2,D_3$ are elements of Lie algebra $so(8)$ and satisfy the principle of triality:
\[   (D_1 x)y+x(D_2y)=\overline{D_3(\overline{xy})}, \kg x,y\in \Ca,   \]
which implies that $D_2,D_3$ are uniquely determined by $D_1$. Hence the map defined by $D\mapsto D_1$ is an isomorphism from $\mathfrak{d}_4$ to $so(8)$.

Furthermore, setting
\begin{eqnarray*}
&&\mathfrak{D}_i= \{ A_i(x)~|~ x\in \Ca  \}, \kg i=1,2,3,\\
&&\mathfrak{R}_i= \{ F_i(x)~|~ x\in \Ca  \}, \kg i=1,2,3,\\
&&\mathfrak{R}_0= \{ \Sigma \xi_iE_i ~|~ \xi_i\in \R, \Sigma\xi_i=0  \},
\end{eqnarray*}
we can decompose the Lie algebra $\mathfrak{e}_6$ as
\begin{equation*}
\mathfrak{e}_6=\mathfrak{d}_4+\mathfrak{D}_1+\mathfrak{D}_2+\mathfrak{D}_3
                     +\sqrt{-1}\mathfrak{R}_0+\sqrt{-1}\mathfrak{R}_1+\sqrt{-1}\mathfrak{R}_2+\sqrt{-1}\mathfrak{R}_3.
\end{equation*}
Since there is a transformation $\sigma$ of $\EJ^{\C}$ expressed as
\[   \sigma \left(
          \begin{array}{ccc}
            \xi_1 & x_3 & \overline{x}_2 \\
            \overline{x}_3 & \xi_2 & x_1 \\
            x_2 & \overline{x}_1 & \xi_3 \\
          \end{array}
        \right)=\left(
          \begin{array}{ccc}
            \xi_1 & -x_3 & -\overline{x}_2 \\
            -\overline{x}_3 & \xi_2 & x_1 \\
            -x_2 & \overline{x}_1 & \xi_3 \\
          \end{array}
        \right),   \]
( obviously, $\sigma^2=\emph{id}$ ), an involution $\gamma$ of
$E_6$ can be naturally defined by
$\gamma(\alpha)=\sigma \alpha \sigma,  $ for $\alpha\in E_6$.
Thus the decomposition of $\mathfrak{e}_6$ corresponding to $\gamma$ can be written as $\mathfrak{e}_6=\mathfrak{t}+\mathfrak{p}$, where
\begin{eqnarray*}
  \mathfrak{t}&=&\{\delta\in e_6~~| ~~\sigma \delta =~~\delta \sigma\}=\mathfrak{d}_4+\mathfrak{D}_1+\sqrt{-1}\mathfrak{R}_0+\sqrt{-1}\mathfrak{R}_1,\\
  \mathfrak{p}&=&\{\delta\in e_6~~| ~~ \sigma \delta =-\delta \sigma\}=\mathfrak{D}_2+\mathfrak{D}_3+\sqrt{-1}\mathfrak{R}_2+\sqrt{-1}\mathfrak{R}_3.
\end{eqnarray*}
Choosing a maximal Abelian subspace of $\mathfrak{p}$ as $\mathfrak{h}=\{A_2(\lambda_1e_0)+\sqrt{-1}F_2(\lambda_2e_1)~|~\lambda_1,\lambda_2\in \R\}$, and denoting by $\Delta$ the set of restricted positive roots with respect to $\mathfrak{h}$, we have
\begin{equation}\label{e6}
\mathfrak{e}_6=\mathfrak{m}+\mathfrak{h}+\sum_{\lambda\in \Delta}\{\mathfrak{t}_{\lambda}+\mathfrak{p}_{\lambda}\},
\end{equation}
where
\begin{eqnarray*}
&&\mathfrak{m}=\{A\in \mathfrak{t}~| \kg [A,H]=0,  \mbox{for~}  H\in \mathfrak{h}\},\\
&&\mathfrak{t}_{\lambda}= \{  A\in \mathfrak{t}~| \kg ad(H)^2A=-\lambda(H)^2A, \mbox{for~} H\in \mathfrak{h} \},\\
&&\mathfrak{p}_{\lambda}= \{  A\in \mathfrak{p}~| \kg ad(H)^2A=-\lambda(H)^2A, \mbox{for~} H\in \mathfrak{h} \}.
\end{eqnarray*}
Set $\widetilde{e}_i=e_1e_i$, for $i>1$ and $G_{ij}=E_{ij}-E_{ji}$, for $i,j=0,1,\cdots,7$, where $E_{ij}$ is the matrix with $(i,j)$ entry $1$ and all others $0$. By a direct computation,
we can express $\mathfrak{m}$ explicitly as
\begin{equation*}
\mathfrak{m}=\mbox{span}\{\sqrt{-1}(E_1-2E_2+E_3), D~|~ D_2=G_{ij},  i,j>1\}\cong so(6)\oplus \R.
\end{equation*}
Moreover, we calculate $\mathfrak{t}_{\lambda}$ and $\mathfrak{p}_{\lambda}$ in (\ref{e6}) with respect to the root system $\Delta$, and list them in the following table.

\begin{center}
\begin{tabular}{|c|c|c|c|}
\hline
$\mathrm{dimension}$ & $\lambda\in \Delta$ & basis of $\mathfrak{t}_{\lambda}$ & basis of $\mathfrak{p}_{\lambda}$\\
\hline
\hline
$6$  &  $\lambda_1$ & $D\in \mathfrak{d}_4 : D_2=G_{0i}, i>1 $ & $ A_2(e_i) :i>1$\\
\hline
$6$  &  $\lambda_2$ & $D\in \mathfrak{d}_4 : D_2=G_{1i}, i>1$ & $ \sqrt{-1}F_2(e_i): i>1$\\
\hline
$1$  &  $\lambda_1-\lambda_2$ & $D+\sqrt{-1}(E_1-E_3) : D_2=G_{01}$ & $ A_2(e_1)+\sqrt{-1}F_2(e_0)$\\
\hline
$1$  &  $\lambda_1+\lambda_2$ & $D-\sqrt{-1}(E_1-E_3) : D_2=G_{01}$ & $ A_2(e_1)-\sqrt{-1}F_2(e_0)$\\
\hline
 &   & $A_1(e_0)-\sqrt{-1}F_1(e_1)$ & $ A_3(e_0)-\sqrt{-1}F_3(e_1)$\\
$8$ & $\frac{1}{2}(\lambda_1-\lambda_2)$  & $A_1(e_1)+\sqrt{-1}F_1(e_0)$ & $ A_3(e_1)+\sqrt{-1}F_3(e_0)$\\
 &   & $A_1(e_i)+\sqrt{-1}F_1(\widetilde{e}_i): i>1$ & $ A_3(e_i)-\sqrt{-1}F_3(\widetilde{e}_i):i>1$\\
\hline
 &   & $A_1(e_0)+\sqrt{-1}F_1(e_1)$ & $ A_3(e_0)+\sqrt{-1}F_3(e_1)$\\
$8$ & $\frac{1}{2}(\lambda_1+\lambda_2)$  & $A_1(e_1)-\sqrt{-1}F_1(e_0)$ & $ A_3(e_1)-\sqrt{-1}F_3(e_0)$\\
 &   & $A_1(e_i)-\sqrt{-1}F_1(\widetilde{e}_i):i>1$ & $ A_3(e_i)+\sqrt{-1}F_3(\widetilde{e}_i):i>1$\\

\hline
\end{tabular}
\end{center}
\vspace{3mm}

Let $K=\{\alpha\in E_6 ~|~ \alpha\sigma=\sigma\alpha\}$, which acts on $\mathfrak{p}$ by the adjoint representation. The
orbits can only be of the following three types:

$1^{\circ}$ If $H_0\in \mathfrak{h}$ with $\lambda_1(H_0)\cdot\lambda_2(H_0)\neq 0$ and $\lambda_1(H_0)\neq \pm \lambda_2(H_0)$,
the Lie algebra of the isotropy subgroup $~K_0$ at $H_0$ is $\mathfrak{m} $ and
$  K_0\cong U(1)\cdot Spin(6).   $

$2^{\circ}$ If $H_+\in \mathfrak{h}$ with either $\lambda_1(H_+)= 0$ or $\lambda_2(H_+)= 0$. Without loss of generality,
assume $\lambda_2(H_+)= 0$.
According to the previous table,
the Lie algebra of the isotropy subgroup $K_+$ is $\mathfrak{m}\oplus \mathfrak{k}_{\lambda_2}\cong so(7)\oplus \R. $
Then it is not difficult to see that
$$    K_+\cong U(1)\cdot Spin(7).         $$

$3^{\circ}$ If $H_-\in \mathfrak{h}$ with either $\lambda_1(H_-)=\lambda_2(H_-)$ or $\lambda_1(H_-)=-\lambda_2(H_-)$.
Without loss of generality,
assume $\lambda_1(H_-)=\lambda_2(H_-)$. According to the previous table, the Lie algebra $\mathfrak{t}_-$ of the
isotropy subgroup $K_-$ is given by
\[   \mathfrak{t}_-=\mathfrak{m}+\mathfrak{t}_{\mu}+\mathfrak{t}_{\mu/2}, \kg \mbox{for~} \mu=\lambda_1-\lambda_2.  \]

Let $V^{10}$ be a $10$-dimensional vector space defined by
\[   V^{10}=\bigg\{\left(
          \begin{array}{ccc}
            0 & 0 & 0 \\
            0 & \xi & x \\
            0 & \overline{x} & -\tau(\xi) \\
          \end{array}
        \right) \kg |\kg \xi\in \C, x\in \Ca  \bigg\}\subset\EJ^{\C}.   \]
It is well known that $K\cong T^1\cdot Spin(10)$, and the representation $\phi:Spin(10)\rightarrow SO(V^{10})$ is just the vector representation.

We finally introduce a complex structure $J$ on $V^{10}$ as follows,
\[    J\left(
          \begin{array}{ccc}
            0 & 0 & 0 \\
            0 & \xi & x \\
            0 & \overline{x} & -\tau(\xi) \\
          \end{array}
        \right)= \left(
          \begin{array}{ccc}
            0 & 0 & 0 \\
            0 & -\sqrt{-1}\xi & x \cdot e_1 \\
            0 & \overline{x\cdot e_1} & -\sqrt{-1}\tau(\xi) \\
          \end{array}
        \right), \kg \mbox{for}~~\xi\in \C, x\in \Ca.      \]

By a direct computation, we find that elements of the subalgebra $[\mathfrak{t}_-,\mathfrak{t}_-]\subset so(10)$ commute with the complex structure $J$, specifically,
$[\mathfrak{t}_-,\mathfrak{t}_-]\cong su(5)$. Moreover, the center $\mathfrak{c(t_-)}\cong \R$ is not contained in $so(10)$. Therefore,
we can conclude
\[  \mathfrak{t}_-=\mathfrak{c(t_-)}\oplus [\mathfrak{t}_-,\mathfrak{t}_-]\cong \R\oplus su(5),   \]
and via the representation $\phi$ the corresponding isotropy subgroup is
\[  K_-\cong S^1\cdot SU(5),    \]
where $S^1$ is a group generated by the center $\mathfrak{c(t_-)}$.
\end{exa}
\vspace{5mm}


\begin{exa}\label{311}
\textbf{An explicit description of $D(S^4_+)$ with $(g,m_+, m_-)=(3,1,1)$.}
\vspace{2mm}

Firstly, recall a result of E. Cartan that the isoparametric hypersurface in this case must be a tube of constant radius over a standard
Veronese embedding of $\mathbb{R}P^2$ into $S^4$.

Let $\nu$ be the normal bundle of $\mathbb{R}P^2\hookrightarrow
S^4$, which is non-orientable since $T\mathbb{RP}^2 \oplus \nu =
\textbf{4}$, a $4$-dimensional trivial bundle. Let $\eta$ be the
Hopf line bundle over $\mathbb{R}P^2$. It is well known that
$T\mathbb{R}P^2\oplus\textbf{1}=3\eta$. Thus
$3\eta\oplus\nu=T\mathbb{R}P^2\oplus\textbf{1}\oplus
\nu=\textbf{5}$. Hence $4\eta\oplus\nu$=$\textbf{5}\oplus\eta$.
\vspace{2mm}

\noindent
\textbf{Assertion 1:} $4\eta\cong \textbf{4}$.
\vspace{2mm}

It follows at once that $\nu\oplus \textbf{4}=\eta\oplus\textbf{5}$. Then we deduce by obstruction theory that
$\nu\oplus\textbf{1}=\eta\oplus\textbf{2}$, and thus $D(S^4_+)\cong S(\nu\oplus \textbf{1})\cong S(\eta\oplus \textbf{2})$.
Furthermore, we show
\vspace{2mm}

\noindent
\textbf{Assertion 2:} $D(S^4_+)\cong S^2\times S^2/\sigma$, where $\sigma$ is an involution.
\vspace{2mm}

\noindent
\emph{Proof of Assertion 2:}\label{assertion 2}
Again, let $\eta$ be a Hopf line bundle over $\mathbb{R}P^n$, $E(\eta)$ be the total space of $\eta$, then
\begin{eqnarray}\label{eta}
&&E(\eta)\cong S^n\times \mathbb{R}\Big/(x,t)\sim(-x,-t)\\
&&\qquad\quad \downarrow\nonumber\\
&&\qquad S^n/x\sim-x=\mathbb{R}P^n\nonumber
\end{eqnarray}
where $x\in S^n$, $t\in \mathbb{R}$.
This interpretation deduces that for $x\in S^n$, $(t_1,...,t_p)\in \mathbb{R}^p$, and $(s_1,...,s_q)\in \mathbb{R}^q$,
\begin{equation}\label{p+qeta}
S^n\times \mathbb{R}^{p+q}\Big/(x,t_1,...,t_p,s_1,...,s_q)\sim(-x,t_1,...,t_p,-s_1,...,-s_q)\cong E(\textbf{p}\oplus q\eta)
\end{equation}
In particular, $$D(S^4_+)\cong S(\eta\oplus\textbf{2})\cong S^2\times S^2\Big/(x,y_1,y_2,y_3)\sim(-x,-y_1,y_2,y_3).$$
where $x\in S^2$, $(y_1,y_2,y_3)\in S^2$.\hfill $\Box$
\vspace{6mm}

Assertion $1$ should be well known. However, we would like to give an interesting and simple proof.

\noindent
\emph{Proof of Assertion 1:}
By (\ref{p+qeta}), it suffices to define a pointwise isomorphism $\Phi$
\begin{eqnarray}\label{assertion 1}
&& S^2\times \mathbb{R}^4  \Big/ (x, t)\sim (-x, -t) ~~\xrightarrow{\Phi}~~ S^2/\mathbb{Z}_2\times \mathbb{R}^4\\
&&\qquad\qquad\qquad\qquad \downarrow \qquad\qquad\qquad\qquad \downarrow\nonumber\\
&&\qquad\qquad\qquad\quad \mathbb{R}P^2\qquad\quad \xrightarrow{\emph{id}}\qquad \mathbb{R}P^2\nonumber
\end{eqnarray}
where $\mathbb{R}^4$ is identified with the quaternions $\mathbb{H}$, and $x\in S^2=\{x\in \mathbb{H}~|~|x|=1, Re~x=0\}$, $t\in\mathbb{H}$.

Define $\Phi (x,t):= (x, xt)$. Obviously, $\Phi$ is well-defined, and for a fixed $x$, it is a linear isomorphism.\hfill $\Box$
\vspace{6mm}

It is worth remarking that $D(S^4_+)$ is not diffeomorphic to the oriented Grassmannian
$$G_2(\mathbb{R}^4)\cong S^2\times S^2/(x,y)\sim(-x,-y),$$
where $x, y\in S^2$. To show this remark, firstly, recall that (\emph{cf.} \cite{Tan})
$$H^*(G_2(\mathbb{R}^4);\mathbb{Z}_2)\cong \mathbb{Z}_2[a_1,a_2]\Big/a^3_1=0, a^2_1a_2+a^2_2=0,$$
where $a_1\in H^1(G_2(\mathbb{R}^4);\mathbb{Z}_2)$, $a_2\in H^2(G_2(\mathbb{R}^4);\mathbb{Z}_2)$.
By this cohomology ring structure, it is not difficult to conclude the total Stiefel-Whitney class $$W(G_2(\mathbb{R}^4))=1+w^2_1.$$
This means that the first Stiefel-Whitney
class vanishes, and the second Stiefel-Whitney class $w_2(G_2(\mathbb{R}^4))\neq 0$. In other words, $G_2(\mathbb{R}^4)$
is an orientable manifold, but not spin,
while $D(S^4_+)$ is spin as mentioned in Proposition \ref{pi-manifold}.
\vspace{6mm}

\end{exa}


\subsection{FKM-type}

In this subsection, we investigate the FKM-type isoparametric hypersurfaces in spheres with four distinct
principal curvatures (\emph{cf.} \cite{FKM} and \cite{CCJ}).

According to \cite{FKM}, for a symmetric Clifford system $\{P_0,...,P_m\}$
on $\mathbb{R}^{2l}$, \emph{i.e.}, $P_i$'s are symmetric matrices
satisfying $P_iP_j+P_jP_i=2\delta_{ij}I_{2l}$, there is a homogeneous polynomial on $\mathbb{R}^{2l}$ defined by
\begin{eqnarray}\label{FKM isop. poly.}
&& \quad F: \mathbb{R}^{2l} \rightarrow \mathbb{R}\\
&& F(z) = |z|^4 - 2\displaystyle\sum_{i = 0}^{m}{\langle
P_iz,z\rangle^2}.\nonumber
\end{eqnarray}
It can be shown that if $l-m-1>0$, then the level sets of the restriction $f=F|_{S^{2l-1}}$ constitute a family
of isoparametric hypersurfaces with
$g=4$ distinct principal curvatures with multiplicities $m_+=m$, $m_-=l-m-1$. The
focal submanifolds are $M_+=f^{-1}(1)$, $M_-=f^{-1}(-1)$, with codimensions $m+1$ and $l-m$ in $S^{2l-1}$, respectively.

Clearly, the $+1$ eigenspace of $P_0$, say $E_+(P_0)$, is invariant
under the transformations $E_1=P_1P_2$,..., $E_{m-1}=P_1P_m$. As
usual, let $\delta(m)$ be the dimension of the irreducible Clifford
algebra $\mathcal{C}_{m-1}$-modules (\emph{e.g.}, $\delta(4)=4$,
$\delta(8)=8$, $\delta(m+8)=16\delta(m)$). Then $l=k\delta(m)$, for
some positive integer $k$. As is known in representation theory,
when $m\equiv 0$ $(\mod~4)$, there exist exactly two irreducible
$\mathcal{C}_{m-1}$-modules $\Delta^+_m$ and $\Delta^-_m$
distinguished by $E_1E_2\cdots E_{m-1}=Id~~or-Id.$ If we write
$E_+(P_0)=a\Delta^+_m\oplus b\Delta^-_m$ as
$\mathcal{C}_{m-1}$-modules, then $$tr(P_0P_1\cdots P_m) =
2q\delta(m),$$ where $q=a-b.$ On the other hand, noticing $k=a+b$,
we see
\begin{equation}
q\equiv k~(\mod~2).
\end{equation}

By \cite{Wan}, two symmetric Clifford systems with the same index
$q$ give rise to equivalent isoparametric functions on $S^{2l-1}$.
Therefore, when $m\equiv 0$ $(\mod~4)$, $f$ is determined by $m$,
$l=k\delta(m)$, as well as $q$ up to a rigid motion of $S^{2l-1}$.
However, when $m\not \equiv 0 $ $(\mod~4)$, $q$ is always zero, so
$f$ is determined only by $m$ and $l$ up to a rigid motion of
$S^{2l-1}$.

In the rest of this subsection, we focus on the case when $m\equiv
0$ $(\mod~4)$.

First, we denote the corresponding isoparametric hypersurface by $M(m,l,q)$, and focal submanifolds by $M_{\pm}(m,l,q)$.
Next, recall the following conclusions shown in (\cite{FKM}):

$(a)$~The normal bundle $\nu_+$ is trivial, in particular, $M(m,l,q)$ is diffeomorphic to the product $M_+(m,l,q)\times S^m$;

$(b)$~$M_-(m,l,q)$ is diffeomorphic to an $S^{l-1}$ bundle over $S^m$.


\vspace{3mm}

Therefore, the property $(a)$, together with the proof of Proposition \ref{pi-manifold}, implies that $D(S^{2l-1}_+)\cong D(B(\nu_+))\cong S(\nu_+\oplus \textbf{1}) \cong M_+\times S^{m+1}$. In the
remaining part, we will be concerned with the topology of $D(S^{2l-1}_-)$. On this account, we prove Theorem \ref{FKM} in two steps.
\vspace{3mm}

\noindent
\textbf{Step 1:}
As stated before, $D(S^{2l-1}_-)\cong D(B(\nu_-))\cong S(\nu_-\oplus \textbf{1})$ is the total space of a sphere bundle.
Set $\zeta:=\nu_-\oplus\textbf{1}$, then $D(S^{2l-1}_-)=S(\zeta)$, where $\zeta$ is a vector bundle over $M_-$ with disc bundle $B(\zeta)$
and sphere bundle $S(\zeta)$, respectively.
\begin{eqnarray}\label{zeta}
&&\zeta: \mathbb{R}^{l-m+1} \hookrightarrow E(\zeta)\supset B(\zeta)\supset S(\zeta) = D(B(\nu_-))\\
&&\qquad\qquad\qquad\quad \downarrow \pi\nonumber\\
&&\qquad\qquad\quad\quad M^{m+l-1}_-\nonumber
\end{eqnarray}

As mentioned in $(b)$ above, $M_-(m,l,q)$ is diffeomorphic to an $S^{l-1}$ bundle over $S^m$, that is:
$M_-(m,l,q)\cong S(\xi)$, where $\xi$ is a vector bundle over $S^m$ so that
\begin{eqnarray}\label{M-}
&& S^{l-1} \hookrightarrow M_-=S(\xi)\\
&&\qquad\qquad \downarrow \pi_1\nonumber\\
&&\qquad\qquad S^m\nonumber
\end{eqnarray}

\begin{lem}\label{3.1}
The Pontrjagin class $$p_{\frac{m}{4}}(\zeta)=-\pi^*_1p_{\frac{m}{4}}(\xi)=-q\cdot\beta(m)\cdot (\frac{m}{2}-1)!\cdot \pi_1^*\gamma,$$ where
$\beta(m)=\left\{\begin{array}{c c}
1, & m\equiv 0~ (\mod~8) \\
2, & m\equiv 4~ (\mod~8)
\end{array}\right.$, $\gamma \in H^m(S^m; \mathbb{Z})$ is a suitable generator.
\end{lem}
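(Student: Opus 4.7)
The plan is to split the lemma into two independent parts: first reduce $p_{m/4}(\zeta)$ to $\pi_1^*p_{m/4}(\xi)$ by bundle-theoretic manipulations, then compute $p_{m/4}(\xi)$ via a K-theoretic identification of $[\xi]$ with $q$ times a generator of $\widetilde{KO}(S^m)$. For the first equality I would exploit stable triviality twice: $M_-\hookrightarrow S^{2l-1}$ with $TS^{2l-1}\oplus\textbf{1}$ trivial forces $TM_-\oplus\zeta=TM_-\oplus\nu_-\oplus\textbf{1}$ to be stably trivial, so $p(TM_-)\,p(\zeta)=1$; and the sphere-bundle structure $M_-=S(\xi)\xrightarrow{\pi_1}S^m$ gives $TM_-\oplus\textbf{1}=\pi_1^*(TS^m\oplus\xi)$, whence the stable triviality of $TS^m$ yields $p(TM_-)=\pi_1^*p(\xi)$. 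Combining, $p(\zeta)=\pi_1^*p(\xi)^{-1}$, and since $H^q(S^m;\mathbb{Z})=0$ for $0<q<m$ only the single term $p_{m/4}(\xi)$ survives in $p(\xi)$, so the inverse collapses to $1-\pi_1^*p_{m/4}(\xi)$.

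For the computation of $p_{m/4}(\xi)$, I would use the explicit Clifford realization of $\xi$: over the Clifford sphere $\Sigma(P_0,\ldots,P_m)\cong S^m$ the fiber at $P$ is the $+1$ eigenspace $\ker(P-\mathrm{Id})$. Viewing $E_+(P_0)$ as a $\mathcal{C}_{m-1}$-module via $E_i=P_1P_{i+1}$, the decomposition $E_+(P_0)=a\Delta^+_m\oplus b\Delta^-_m$ into inequivalent irreducibles (available only because $m\equiv 0\pmod 4$) feeds into the Atiyah-Bott-Shapiro construction to express $[\xi]=(a-b)[\xi^+]=q[\xi^+]$ in $\widetilde{KO}(S^m)\cong\mathbb{Z}$, where $\xi^+$ denotes the basic bundle attached to $\Delta^+_m$.

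Finally, the Bott-Kervaire integrality theorem identifies the top Pontrjagin class of this generator as $p_k(\xi^+)=a_k(2k-1)!\,\gamma$ for $k=m/4$, with $a_k=1$ when $k$ is even and $a_k=2$ when $k$ is odd; these match $\beta(m)$ and $(m/2-1)!$ exactly, giving $p_{m/4}(\xi)=q\,\beta(m)\,(m/2-1)!\,\gamma$ and, by pullback, the full lemma. The main obstacle is the middle step: verifying that $\Delta^+_m$ and $\Delta^-_m$ contribute with opposite signs in $\widetilde{KO}(S^m)$ so that $[\xi]=(a-b)[\xi^+]$ rather than $(a+b)[\xi^+]$. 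Once this K-theoretic identification is in place, the remaining Pontrjagin computation is a standard application of Bott integrality.
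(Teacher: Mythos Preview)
Your plan is correct and follows essentially the same route as the paper: stable triviality of $TM_-\oplus\nu_-$ together with $TM_-\oplus\textbf{1}\cong\pi_1^*(TS^m\oplus\xi)$ yields the first equality, and the K-theoretic identification $[\xi]=q[\Delta^+]$ in $\widetilde{KO}(S^m)$ followed by the Bott integrality computation (which the paper phrases as complexification plus the Chern character) yields the second. Two small points worth noting: the paper resolves your flagged ``main obstacle'' simply by observing that $\Delta^+\oplus\Delta^-$ extends to a $\mathcal{C}_m$-module and hence is stably trivial over $S^m$, so $[\Delta^-]=-[\Delta^+]$ and $[\xi]=a[\Delta^+]+b[\Delta^-]=(a-b)[\Delta^+]$; and the paper explicitly invokes the torsion-freeness of $H^*(M_-;\mathbb{Z})$ (from M\"unzner's computation) to ensure that the Whitney product formula $p(TM_-)\,p(\zeta)=1$ holds on the nose rather than only modulo 2-torsion---a point your argument uses implicitly but does not state.
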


\begin{proof}

Denoting the total Pontrjagin class of an arbitrary vector bundle $\eta$ by $P(\eta)=1+p_1(\eta)+p_2(\eta)+...$, from \cite{MS}, we
know that $P(\eta\oplus\textbf{1})=P(\eta)$.

Firstly, since $\nu_-\oplus TM_-\cong TS^{2l-1}|_{M_-}$ is stably
parallelizable (in fact it is trivial for the dimension reason), we
have $P(\nu_-\oplus TM_-)=1$. On the other hand, since $m\geq 4$,
$l=k\delta(m)\equiv 0~(\mod~4)$, thus $l-1$ can not be divided by
$4$. By reason of rank of the sphere bundle $S(\xi)$, we can deduce
that $P(M_-):=P(TM_-)=1+p_{\frac{m}{4}}(M_-)$.


Consequently, since in this case the cohomology of $M_-$ with
coefficients $\mathbb{Z}$ has no torsion ( Proposition
\ref{homology} ), it follows from $P(\zeta)=P(\nu_-\oplus
\textbf{1})=P(\nu_-)$ that
\begin{equation}\label{p4m}
p_{\frac{m}{4}}(\zeta)=-p_{\frac{m}{4}}(M_-).
\end{equation}

Next, for the tangent bundle of $M_-$, we have $TM_-\oplus\textbf{1}\cong \pi_1^*TS^m\oplus\pi_1^*\xi$. As a direct result,
$TM_-\oplus\textbf{2}=\textbf{(m+1)}\oplus\pi_1^*\xi$, which implies that
$p_{\frac{m}{4}}(M_-)=p_{\frac{m}{4}}(\pi^*_1\xi)$.

Since $l-1>m$, there exists a section of the sphere bundle $S(\xi)$,
thus its Euler class $e$ vanishes. Hence, from the Gysin cohomology
sequence with coefficients $\mathbb{Z}$ associated with $S(\xi)$:
\begin{equation}\label{Gysin}
\rightarrow H^i(S^m)\xrightarrow{e}H^{i+l}(S^m)\xrightarrow{\pi_1^*}H^{i+l}(M_-)\rightarrow H^{i+1}(S^m)\rightarrow\cdots,
\end{equation}
we deduce that
\begin{equation}\label{pi1 isomorphism}
\pi^*_1:~~H^m(S^m; \mathbb{Z})\longrightarrow H^m(M_-; \mathbb{Z})~\qquad~~~~is~ an ~isomorphism.
\end{equation}
Thus by (\ref{p4m})
\begin{equation}\label{p4m xi}
p_{\frac{m}{4}}(\zeta)=-\pi^*_1p_{\frac{m}{4}}(\xi)~\quad under~ the~ isomorphism ~\pi^*_1.
\end{equation}


At the mean time, $\xi-rank~\xi \in \widetilde{KO}(S^m)$, which will be abbreviated as $\xi$. Let us consider the complexification homomorphism
\begin{eqnarray}\label{c}
&& \mathbb{C}:~\widetilde{KO}(S^m)~\rightarrow~\widetilde{K}(S^m)\\
&&\qquad\quad \quad\xi ~~\mapsto~~~\xi\otimes \mathbb{C}\nonumber\\
&&\qquad\quad \quad 1 ~~\mapsto~~~\left\{\begin{array}{c c}
1, & m\equiv 0~ (\mod~8) \\
2, & m\equiv 4~ (\mod~8)
\end{array}\right.\nonumber
\end{eqnarray}


Recall a well known result that the Chern character $Ch:~\widetilde{K}(S^m)\longrightarrow H^m(S^m; \mathbb{Z})$ is an isomorphism for even $m$, namely, the top
Chern class of the generator of $\widetilde{K}(S^m)$ is equal to a generator of $H^m(S^m; \mathbb{Z})$ multiplied by $(\frac{m}{2}-1)!$.
By the isomorphisms $\xi\cong a\Delta^++b\Delta ^-\cong (a-b)\Delta^++b(\Delta^++\Delta ^-)\cong q\Delta^++\textbf{b}$,
(since $\Delta^++\Delta ^-$ is trivial) where $\Delta^+\in \widetilde{KO}(S^m)$ is a generator, we finally arrive at
\begin{equation}\label{delta}
p_{\frac{m}{4}}(\xi)= q\cdot\beta(m)\cdot (\frac{m}{2}-1)!\cdot \gamma \in H^m(S^m; \mathbb{Z})\cong \mathbb{Z},
\end{equation}
where $\gamma\in H^m(S^m; \mathbb{Z})$ is a suitable generator, and
$\beta(m)=\left\{\begin{array}{c c}
1, & m\equiv 0~ (\mod~8) \\
2, & m\equiv 4~ (\mod~8)
\end{array}\right.$


\vspace{4mm}

In summary, $p_{\frac{m}{4}}(\zeta)=-\pi^*_1p_{\frac{m}{4}}(\xi)=-q\cdot\beta(m)\cdot (\frac{m}{2}-1)!\cdot \pi_1^*\gamma$.
\end{proof}
\vspace{5mm}

\noindent
\textbf{Step 2:} We mainly make use of Wu Square $\mathcal{P}^1$ (\emph{cf}. \cite{MS} ).

Let $p:=2r+1\geq 3$ be an odd prime, and $m=2(p-1)=4r\equiv
0~(\mod~4)$. We will need the following fundamental criterion from
number theory.

\noindent \textbf{Wilson's Theorem:}\,\, {\itshape $p$ is a
prime\quad if and only if\quad $(p-1)!\equiv-1~(\mod~p)$. }

We recall the Wu Squares with coefficients $\mathbb{Z}_p$, which are
generalized from Steenord Squares with coefficients $\mathbb{Z}_2$
by Wu Wen-Ts\"{u}n:
$$\mathcal{P}^i:~~H^j(X; \mathbb{Z}_p)\rightarrow H^{j+4ri}(X; \mathbb{Z}_p).$$
For $(B(\zeta), S(\zeta))\supset B(\zeta) \supset S(\zeta)$, one has
the following commutative diagram of the cohomology sequences with
coefficients $\mathbb{Z}_p$:
\begin{eqnarray}\label{delta Pi}
&& \quad H^j(B(\zeta), S(\zeta))~~\rightarrow ~~ H^j(B(\zeta))~~ \rightarrow~~ H^j(S(\zeta))~~\xrightarrow{\delta}~~ H^{j+1}(B(\zeta), S(\zeta))\rightarrow \cdots\\
&&\qquad\quad\downarrow \mathcal{P}^i\qquad\qquad\qquad \downarrow \mathcal{P}^i\qquad\qquad\quad \downarrow \mathcal{P}^i\qquad\qquad\quad \downarrow \mathcal{P}^i\nonumber\\
&&H^{j+4ri}(B(\zeta), S(\zeta))\rightarrow H^{j+4ri}(B(\zeta))\rightarrow H^{j+4ri}(S(\zeta))\xrightarrow{\delta} H^{j+1+4ri}(B(\zeta), S(\zeta))\rightarrow\cdots\nonumber
\end{eqnarray}
in particular, $\mathcal{P}^i$ satisfies $\delta\mathcal{P}^i=\mathcal{P}^i\delta$.

Define $q_i(\zeta):=\Phi^{-1}\cdot \mathcal{P}^i\cdot \Phi(1)$, where $1\in H^0(M_-; \mathbb{Z}_p)$, and $\Phi$ is the Thom isomorphism
$$\Phi: H^i(M_-; \mathbb{Z}) \longrightarrow H^{i+l-m+1}(B(\zeta), S(\zeta); \mathbb{Z}).$$ By Wu Theorem (\emph{cf.} \cite{MS}), $q_i(\zeta)$ can be expressed in form of a combination of Pontrjagin classes $p_1(\zeta)$, $p_2(\zeta)$,..., $p_{ri}(\zeta)$. Observing $p_0(\zeta)=1$, $p_1(\zeta)=0$,...,$p_{r-1}(\zeta)=0$, $p_r(\zeta)\neq 0$, we want to represent $q_1(\zeta)$ by $p_r(\zeta)$.

By Newton's identities and Lemma \ref{3.1},
\begin{eqnarray*}
&&q_1(\zeta)\\
&\equiv& (-1)^{r+1}\cdot r\cdot p_r(\zeta)~(\mod~p)\\
&\equiv& (-1)^{\frac{m}{4}+1}\cdot r\cdot (-1)\cdot q \cdot \beta(m)\cdot (\frac{m}{2}-1)!\pi_1^*(\gamma)~ (\mod~p)\\
&\equiv& (-1)^{\frac{m}{4}}\cdot q\cdot
\beta(m)\cdot\frac{p-1}{2}\cdot(p-2)!\pi_1^*(\gamma)~ (\mod~p)
\end{eqnarray*}

When $m\equiv 4 ~(\mod~8)$, a direct application of Wilson Theorem
gives $q_1(\zeta)\equiv (-1)^{\frac{m}{4}+1}\cdot q\cdot
\pi_1^*(\gamma)~ (\mod ~p)$; When $m\equiv 0 ~(\mod~8)$, by Wilson
Theorem, it is not difficult to show $\frac{1}{2}(p-1)!\equiv
\frac{1}{2}(p-1)$ $(\mod~p)$,
which yields that $q_1(\zeta)\equiv (-1)^{\frac{m}{4}}\cdot q\cdot\frac{p-1}{2}\cdot \pi_1^*(\gamma)~ (\mod ~p)$. 

In summary,
\begin{equation}\label{p1 phi}
q_1(\zeta)\equiv\left\{
\begin{array}{c c}
(-1)^{\frac{m}{4}}\cdot q\cdot\frac{p-1}{2}\pi_1^*(\gamma)~ (\mod ~p), & m\equiv 0 ~(\mod~8)\\
(-1)^{\frac{m}{4}+1}\cdot q\cdot \pi_1^*(\gamma)~ (\mod ~p),&
m\equiv 4 ~(\mod~8)
\end{array}\right.
\end{equation}

Fix $j=l-m$, $i=1$ in (\ref{delta Pi}). Since $m<l-m<l-1$, $H^{l-m}(B)=0$, $H^l(B)=0$, so both $\delta$'s are injective:
\begin{eqnarray}
&&H^{l-m}(S)  \autorightarrow{$\delta$}{inj.} H^{l-m+1}(B,S) \autoleftarrow {$\Phi$}{$\cong$} H^0(M_-)\cong \mathbb{Z}_p\\
&&\quad\downarrow\mathcal{P}^1\qquad\qquad\quad~~ \downarrow\mathcal{P}^1\nonumber\\
&&~~~H^{l}(S)  ~~\autorightarrow{$\delta$}{inj.}~~ H^{l+1}(B,S) \autoleftarrow {$\Phi$}{$\cong$} H^m(M_-)\cong \mathbb{Z}_p\nonumber
\end{eqnarray}

On the other hand, by Gysin cohomology sequence of $\zeta$ with coefficient $\mathbb{Z}_p$:
\begin{equation}\label{Gysin zeta}
\rightarrow H^k(M_-)\xrightarrow{\cup e}H^{k+l-m+1}(M_-)\xrightarrow{\pi^*}H^{k+l-m+1}(S(\zeta))\rightarrow H^{k+1}(M_-)\rightarrow\cdots
\end{equation}
we get $H^{l-m}(S(\zeta))\cong \mathbb{Z}_p$, $H^l(S(\zeta))\cong \mathbb{Z}_p$, thus $\delta$ are isomorphisms since they are injective as we stated above.
At last, by the definition $q_1(\zeta):=\Phi^{-1}\cdot \mathcal{P}^1\cdot \Phi(1)$, different values of $q$ give rise to different Wu squares $\mathcal{P}^1$, as we desired.

The proof of Theorem \ref{FKM} is now complete!

\begin{ack}
The authors would like to thank Professor Hui Ma for her valuable
conversations concerning the calculations on exceptional Lie groups.
They also want to express their gratitude to Professor Kefeng Liu
and the referee for their interest and helpful comments.
\end{ack}

\end{document}